\date{}
\title{Indistinguishability of the components of random spanning forests}
\author{ \'Ad\'am Tim\'ar\\ 
\small Alfr\'ed R\'enyi Institute of Mathematics\\[-0.8ex]
\small Re\'altanuda u. 13-15.,\\
\small H-1053 Budapest\\
\small \texttt{madaramit[at]gmail.com}\\}
\renewcommand\footnotemark{}
\newif\ifhyper\IfFileExists{hyperref.sty}{\hypertrue}{\hyperfalse}
\ifhyper\usepackage{hyperref}\fi
\newif\ifdraft
\numberwithin{equation}{section}
\numberwithin{figure}{section}
\theoremstyle{definition}
\newtheorem{exmp}{Example}[section]
\newtheorem{theorem}{Theorem}
\numberwithin{theorem}{section}
\newtheorem{corollary}[theorem]{Corollary}
\newtheorem{lemma}[theorem]{Lemma}
\newtheorem{remark}[theorem]{Remark}
\newtheorem{proposition}[theorem]{Proposition}
\newtheorem{definition}{Definition}
\newcommand{\Z}{\mathbb{Z}}
\def \P {{\bf P}}
\def \E {{\bf E}}
\def \_reg {\rightarrow_{\bf reg}}
\def\maxdeg/{\Delta}
\def\max{{\rm max}}
\def\dist{{\rm dist}}
\def\dist{{\rm dist}}
\def\W{{\cal W}}
\def\F{{\cal F}}
\def\T{{\cal T}}
\def\B{{\cal B}}
\def\A{{\cal A}}
\def\Pef{{\cal P}_{e,x}^f}
\def\Pefn{{\cal P}_{e, x}^{f}}
\def\piin{{\pi_{e}^{f}}}
\def\pii{\pi_e^f}
\def\Ene{{\cal E}^n_{x,e,f}}
\def\phie{\phi (e,\lambda)}
\def\ff{f(\omega,e,x,r)}
\begin{document}
\maketitle
\let\thefootnote\relax\footnotetext{\footnotesize{\it{Keywords and phrases:} Spanning Forests, Uniform Spanning Forest, Minimal Spanning Forest, insertion tolerance, indistinguishability

2010 \it{Mathematics Subject Classification:} Primary 60D05, Secondary 82B43

The author is a Marie Curie fellow.}}

\bigskip

\begin{abstract}
We prove that the infinite components of the Free Uniform Spanning Forest (FUSF) of a Cayley graph are indistinguishable by any invariant property, given that the forest is different from its wired counterpart. Similar result is obtained for the Free Minimal Spanning Forest (FMSF). We also show that with the above assumptions there can only be 0, 1 or infinitely many components. These answer questions by Benjamini, Lyons, Peres and Schramm for the case of unimodular random graphs, which have been open for the FUSF up to now. Our methods apply to a more general class of percolations, those satisfying ``weak insertion tolerance", and work beyond Cayley graphs, in the more general setting of unimodular random graphs.


\bigskip
\bigskip
\end{abstract}

\section{Introduction}\label{s.intro}

We prove indistinguishability and 1-infinity laws for the components ({\it clusters}) of random spanning forests of Cayley graphs, given that the forest has a property that we call weak insertion tolerance (see Definition \ref{maindef}), and it has a tree with infinitely many ends. The perhaps most important examples of random forests that satisfy weak insertion tolerance are the Free and the Wired Uniform Spanning Forest (FUSF and WUSF) and the Free and the Wired Minimal Spanning Forest (FMSF and WMSF). See Definitions \ref{usf}, \ref{msf}. The importance and some main properties of the uniform and minimal spanning forests are explained in \cite{LP}.

In particular, the following theorems are proved.

\begin{theorem}\label{USF_all}
Suppose that the FUSF and WUSF are different for some unimodular quasitransitive graph $G$. Then the following hold:
\begin{enumerate}
\item The FUSF has either 1 or infinitely many components.
\item Every component of the FUSF has infinitely many ends.
\item More generally, no two components of the FUSF can be distinguished by any invariantly defined property.
\end{enumerate}
\end{theorem}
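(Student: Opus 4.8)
The plan is to follow the overall strategy of Lyons--Schramm indistinguishability, but adapted to the forest setting where the natural symmetry is not insertion tolerance in the usual sense (the forest is not a standard percolation) but the weaker ``weak insertion tolerance'' of Definition~\ref{maindef}. First I would set up the contrapositive: suppose $\mathcal A$ is an invariant property of components, and that with positive probability some component lies in $\mathcal A$ and some other component lies outside $\mathcal A$. Because of the $1$-or-$\infty$ statement (part~1, which I would prove first, and which I expect follows from weak insertion tolerance plus an ergodicity/indistinguishability-style argument in the spirit of \cite{LP}), we are in the case of infinitely many components, all with infinitely many ends (part~2, itself a consequence of part~3 applied to the ``has finitely many ends'' property, or proved directly). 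So the real content is part~3.

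For part~3, the key device is a ``pivotal switch'': using weak insertion tolerance I want to show that one can modify the forest locally --- adding an edge $e$ that creates a cycle and deleting some edge on that cycle --- so that the law of the new configuration is absolutely continuous with respect to the old one (this is exactly what weak insertion tolerance should buy, perhaps only after conditioning on a suitable event and paying a Radon--Nikodym cost). The plan is then to use such switches to ``transport'' a piece of one component into another, or to reconnect two different components through a bounded modification, along an exploration guided by a random walk or by the component structure. The Lyons--Schramm machinery then says: if components were distinguishable, then the walk's ambient component type would be asymptotically determined, but the switches let us change which component the walker ends up in with non-negligible probability, contradicting the $0$--$1$ law for the tail/invariant $\sigma$-field. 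Concretely I would (i) fix a putative distinguishing property $\mathcal A$ and restrict to a positive-probability event where both types occur; (ii) run a random walk (or use the mass-transport principle on a unimodular random graph, following the hypotheses) to select, in an invariant way, a ``target'' edge $e$ whose endpoints lie in two components of different $\mathcal A$-type, or in the same component at the right scale; (iii) apply weak insertion tolerance to reroute, getting a configuration in which the walker's host component has flipped type; (iv) derive a contradiction with the invariance of $\mathcal A$ via a mass-transport or stationarity computation.

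The main obstacle I anticipate is step (iii): controlling the effect of the switch on the component structure and on membership in $\mathcal A$. Adding an edge merges two trees (or creates a cycle within one tree), and then one must delete a well-chosen edge; but which edge gets deleted, and whether the resulting object is still a spanning forest with the right marginal, is delicate --- weak insertion tolerance presumably only guarantees that \emph{some} such modified forest has an absolutely continuous law, not that we control exactly which edge leaves. So a careful definition of the modification map, a proof that it is measurable and (after conditioning) measure-nonsingular, and an argument that it changes the $\mathcal A$-type of the relevant component with probability bounded below, will be the crux. A secondary difficulty is making the whole construction genuinely invariant / compatible with the unimodular random graph framework, so that the mass-transport principle applies and no ``escape to infinity'' of mass occurs; here I would lean on the standard trick of averaging the switch over a random walk to restore stationarity, as in Lyons--Schramm, and on the assumption that the forest has a tree with infinitely many ends to guarantee that the required reroutings exist at all scales.
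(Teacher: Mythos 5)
Your plan for part~3 is essentially the paper's route: restrict to a positive-probability event where both types occur, use weak insertion tolerance to build a measure-nonsingular ``pivotal switch'' $\omega\mapsto\omega\cup\{e\}\setminus\{f\}$ with a Radon--Nikodym bound, select the switch via a delayed random walk on the cluster, and derive a contradiction from stationarity of the walk plus the approximation of $\mathcal A_o$ by a cylinder event. So the heart of your proposal is sound.

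The genuine gap is in your treatment of part~1. You write that the $1$-or-$\infty$ law ``follows from weak insertion tolerance plus an ergodicity/indistinguishability-style argument in the spirit of \cite{LP}.'' This is exactly the step the paper flags as \emph{not} generalizing: the standard ergodicity argument for Bernoulli percolation assumes $1<k<\infty$ components, inserts an edge to reduce the count to $k-1$, and contradicts ergodicity because both $k$ and $k-1$ would then have positive probability. Under weak insertion tolerance, however, inserting $e$ \emph{forces} you to delete an edge $f$ inside $C_x$, and that deletion splits $C_x$ into two pieces. The net number of components is unchanged, so no contradiction with ergodicity arises. The paper instead proves part~1 (Theorem~\ref{comps}) by a mass-transport argument: it shows that if $1<k<\infty$ then two components must touch at an exponentially growing set of points, defines a mass transport where each vertex sends mass $i^{-2}$ to the $i$-th vertex along paths to touching points, and the WIT-modified configuration makes the expected received mass infinite while the sent mass stays bounded. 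That requires the specific clause of WIT that $f$ can be taken far from $e$ and inside $C_x$, plus the exponential growth coming from infinitely many ends. Your proposal does not contain this idea and the route you sketch would not produce a contradiction.

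A secondary issue: you suggest part~2 could be obtained from part~3 applied to the invariant property ``has finitely many ends.'' But the paper's proof of part~3 invokes part~2 as an input --- the random-walk/stationarity argument requires transience of the walker's cluster, which is exactly what ``infinitely many ends for every cluster'' provides, and Lemma~\ref{pivotal_exists} explicitly appeals to Theorem~\ref{ends}. So in the paper's architecture the implication runs part~2~$\Rightarrow$~part~3, not the reverse, and deriving part~2 from part~3 would be circular. You do add ``or proved directly,'' which is the only non-circular option; the paper's direct proof of Theorem~\ref{ends} is a short argument (insert $e$ between an infinitely-ended cluster and a finitely-ended one, delete $f$ far from $e$ inside $C_x$, and observe the merged cluster acquires an isolated end, impossible by mass transport), again using the $f\in C_x$, $f\notin B(x,r)$ clause of WIT in an essential way.
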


The condition FUSF$\not=$WUSF is equivalent to that there exist nonconstant harmonic Dirichlet functions on $G$, or, in different terms, that the first ${\rm L}^2$ Betti number is nonzero. This was shown by Benjamini, Lyons, Peres and Schramm, see \cite{BLPS}.

\begin{theorem}\label{MSF_all}
Suppose that the FMSF and WMSF are different for some unimodular quasitransitive graph $G$. Then the following hold:
\begin{enumerate}
\item The FMSF has either 1 or infinitely many components.
\item Every component of the FMSF has infinitely many ends.
\item More generally, no two components of the FMSF can be distinguished by any invariantly defined property.
\end{enumerate}
\end{theorem}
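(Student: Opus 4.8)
The plan is to obtain Theorem~\ref{MSF_all} from our general indistinguishability theorem for weakly insertion tolerant percolations, in complete parallel with the proof of Theorem~\ref{USF_all}. That general theorem applies to any $\mathrm{Aut}(G)$-invariant percolation on a unimodular quasitransitive graph $G$ that is weakly insertion tolerant (Definition~\ref{maindef}) and has a component with infinitely many ends, and it yields statement~3, that no two components are distinguishable by an invariantly defined property. Granting this, statement~2 follows at once: ``having infinitely many ends'' is itself an invariantly defined property of components, so if one component had it and another did not, those two would be distinguished; hence once one component has infinitely many ends, every component does. Statement~1 is not a consequence of statement~3, but follows from the $0$--$1$--$\infty$ law for the number of infinite components, which holds for weakly insertion tolerant percolations by an adaptation of the Burton--Keane / mass-transport argument familiar from the insertion tolerant case (merging two trees of a forest by one edge preserves the forest property, and weak insertion tolerance makes this move admissible). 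It therefore remains to verify, under the hypothesis $\mathrm{FMSF}\neq\mathrm{WMSF}$, that (i) the FMSF is weakly insertion tolerant and (ii) the FMSF has a component with infinitely many ends.

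For (i) I would use the standard representation of the FMSF as a factor of i.i.d.\ $\mathrm{Unif}[0,1]$ edge-labels $(U_e)_{e\in E(G)}$: an edge $e=\{x,y\}$ belongs to the FMSF if and only if $U_e<\beta_e$, where $\beta_e:=\inf_P\max_{f\in P}U_f$, the infimum taken over finite paths $P$ from $x$ to $y$ that avoid $e$. The update operation required by Definition~\ref{maindef} sends a label configuration and a target edge $e$ to the configuration obtained by resampling only the coordinate $U_e$ so that it becomes uniform on $[0,\beta_e)$; note that $\beta_e$ does not depend on $U_e$. This forces $e$ into the FMSF and automatically deletes the edge that previously realized the bottleneck on the cycle through $e$, so the result is again a spanning forest. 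Since the coordinates $(U_f)$ are independent, the conditional law of $U_e$ given all other labels is still uniform on $[0,1]$, so the resampling changes the law of the labels, and hence of the FMSF, in an absolutely continuous way; one then checks that this matches the formal requirements of Definition~\ref{maindef}. The word ``weak'' is doing real work here: the FMSF fails ordinary insertion tolerance (adding an edge to a spanning forest destroys the forest property), and the Radon--Nikodym factor $1/\beta_e$ is unbounded, but Definition~\ref{maindef} is designed to accommodate exactly this.

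For (ii), recall that the WMSF has one end in each of its trees almost surely (Lyons--Peres--Schramm; see~\cite{LP}), and that on the common i.i.d.-label coupling one has $\mathrm{WMSF}\subseteq\mathrm{FMSF}$ a.s., with every edge of $\mathrm{FMSF}\setminus\mathrm{WMSF}$ joining two \emph{distinct} WMSF trees (it cannot lie inside one, since that would produce a cycle in the forest $\mathrm{FMSF}$). If $\mathrm{FMSF}\neq\mathrm{WMSF}$, such an extra edge exists with positive probability, hence, by ergodicity of the label process under $\mathrm{Aut}(G)$ together with a mass-transport argument, extra edges appear with positive density along infinite WMSF trees. Gluing one-ended WMSF trees along enough of these edges produces an FMSF component with at least three, and in fact infinitely many, ends; alternatively one invokes the corresponding statement about the FMSF in~\cite{LP}. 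This is the step that genuinely uses $\mathrm{FMSF}\neq\mathrm{WMSF}$.

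I expect the principal obstacle to be (i): pinning the single-coordinate resampling above to the exact form of Definition~\ref{maindef}, and verifying that weak insertion tolerance is simultaneously weak enough to include the FMSF (which truly lacks ordinary insertion tolerance) and strong enough to drive the general indistinguishability theorem --- the delicate point being that there is no uniform lower bound on $\beta_e$, so the ``defect'' must be absorbed by the definition rather than by a bounded density. A secondary non-routine point is the last step of (ii), upgrading ``at least three ends somewhere'' to ``infinitely many ends in some component'' without circularly appealing to the indistinguishability conclusion.
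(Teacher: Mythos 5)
Your overall architecture matches the paper's: reduce to a general indistinguishability theorem for weakly insertion tolerant (WIT) forests, verify WIT for the FMSF, and supply the input that some component has infinitely many ends. The paper routes Theorem~\ref{MSF_all} through Corollary~\ref{instolMSF} (WIT via Lemma~\ref{lemmamain2}, using $\mathrm{FMSF}\neq\mathrm{WMSF}\Rightarrow p_c<p_u\Rightarrow\theta(p_c,G)=0$), together with Theorems~\ref{ends}, \ref{comps}, \ref{indist}, plus ergodicity and Proposition~3.5(e) of~\cite{LPS} for the infinitely-many-ends input. So the decomposition is correct. However, two of the individual steps as you describe them have real gaps.

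The central gap is in your verification of WIT for the FMSF. Your proposed move --- resample $U_e$ uniformly on $[0,\beta_e)$, forcing $e$ into the forest and ejecting the bottleneck edge $\phi(e,\lambda)$ --- does establish absolute continuity, but Definition~\ref{maindef} demands considerably more: if $f\neq\emptyset$ then $f=f(\omega,e,x,r)$ must lie in $C_x\cap B(x,r)^c$, i.e.\ the deleted edge must lie in the tree of the \emph{chosen} endpoint $x$, at distance at least $r$ from $x$. The bare resampling gives you no control over where $\phi(e,\lambda)$ lands: it can lie in $C_y$, it can lie near $e$, and a priori it need not lie in $C_x\cup C_y$ at all. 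This is exactly the problem that the paper's proof of Lemma~\ref{lemmamain2} is built to solve. That proof constructs an iterated sequence $Z_1(e),f_1;\,Z_2(e),f_2;\dots$ of ``next bottlenecks'', then produces a \emph{modified} labelling $\lambda'$ with $F(\lambda')=F(\lambda)$ by lowering the labels $\lambda(f_1),\dots,\lambda(f_{k-1})$ so that the bottleneck for $e$ under $\lambda'$ is precisely the first $f_k\notin C_y\cup(B(x,r)\cap C_x)$; it then uses the one-endedness of WMSF trees (which needs $\theta(p_c,G)=0$, hence the hypothesis $\mathrm{FMSF}\neq\mathrm{WMSF}$) to show that $f_k\in C_x\cup C_y$, so in fact $f_k\in C_x\cap B(x,r)^c$. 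Ruling out $k=\infty$ is a further nontrivial argument using $\theta(p_c,G)=0$ again. You flag the unboundedness of $1/\beta_e$ as the delicate point, but that is comparatively benign; the genuinely hard requirement in Definition~\ref{maindef} is the location constraint on $f$, and your proposal does not address it.

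A second gap is statement~1. You assert that the $0$--$1$--$\infty$ law ``holds for weakly insertion tolerant percolations by an adaptation of the Burton--Keane / mass-transport argument familiar from the insertion tolerant case.'' The paper explicitly remarks (before the proof of Theorem~\ref{comps}) that this adaptation does \emph{not} work: inserting $e$ merges two components, but WIT simultaneously deletes $f$, which may split a component, so the parity/count argument that drives the ordinary $0$--$1$--$\infty$ proof yields no contradiction. The paper's Theorem~\ref{comps} replaces it with a different mass-transport argument exploiting exponential growth of the set of touching points between two adjacent components; that argument in turn uses the $f\in C_x\cap B(x,r)^c$ clause of WIT (so that enough touching points survive the deletion of $f$). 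Your part~(ii) input (that $\mathrm{FMSF}\neq\mathrm{WMSF}$ forces some FMSF tree to have infinitely many ends) is also only sketched; the paper simply cites Proposition~3.5(e) of~\cite{LPS}, which is the clean way to close that loop without circularity.
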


The condition FMSF$\not=$WMSF is equivalent to $p_c<p_u$, as shown by Lyons, Peres and Schramm. Here $p_c$ and $p_u$ are respectively the critical probability and uniqueness critical probability for Bernoulli percolation on $G$. The condition $p_c<p_u$ is conjecturally equivalent to $G$ being nonamenable, and is known to hold for {\it some} Cayley graph of every nonamenable group. See \cite{LPS} for more details.

All the results of this paper, including Theorems \ref{USF_all} and \ref{MSF_all}, remain valid if $G$ is a {\it unimodular random graph}. See \cite{AL} for the definition of this notion, which includes all unimodular quasitransitive graphs (or more generally,  invariant random subgraphs of a unimodular quasitransitive graph). We present the proofs for unimodular quasitransitive graphs because this setting is more widely known. Remark \ref{unimod} describes the extra details needed for the proofs to be applied to a unimodular random $G$.

The above theorems follow from Lemma \ref{lemmamain1}, Corollary \ref{instolMSF} and Theorem \ref{comps} (Part 1), Theorem \ref{ends} (Part 2), Theorem \ref{indist} (Part 3). One needs that the uniform and the minimal spanning forests are ergodic, which were proved in \cite{BLPS} and \cite{LPS} respectively. What needs to be further added is that
FUSF$\not =$WUSF implies that some tree of FUSF has infinitely many ends; and similarly for the minimal spanning forest. For the uniform spanning forest this is true by  Proposition 10.11 in \cite{BLPS}  and for the minimal spanning forest this is part (e) of Proposition 3.5 in \cite{LPS}.

Theorems \ref{USF_all} and \ref{MSF_all} resolve questions asked by Benjamini, Lyons, Peres and Schramm \cite{BLPS} and by Lyons, Peres and Schramm \cite{LPS}. 
Part 1 in Theorem \ref{USF_all} answers Question 15.6 in \cite{BLPS}, Part 2 answers Question 15.8 for the case when the transitive graph is unimodular, while Part 3 confirms Conjecture 15.9 in the same paper for the case of FUSF when FUSF$\not=$ WUSF. Part 1 of Theorem \ref{MSF_all} was  Question 6.7 in \cite{LPS} and was answered in \cite{T_msf} using a different method as here. 
Parts 2 and 3 answer Question 6.10 and Conjecture 6.11, respectively, for the case of FMSF when FMSF$\not=$WMSF. These were first solved by Chifan and Ioana in \cite{CI} (see Corollary 9), using operator algebraic techniques and the result of \cite{T_msf} that the number of ends is the same in every component. Our approach provides a direct probabilistic proof of the same theorem.
The conjecture on the indistinguishability of FMSF-clusters was restated by Gaboriau and Lyons in \cite{GL}, because in case of a positive answer (as provided by Theorem \ref{MSF_all}), the FMSF can serve as the treeable ergodic subrelation in their construction (Proposition 13 of \cite{GL}) for some Cayley graph of the given group. After finishing the first draft of this manuscript, we learnt that Hutchcroft and Nachmias gave an independent proof, about the same time, to the results of Theorem \ref{USF_all}, assuming only transitivity \cite{HN}. Their paper further shows the same conclusions for the WUSF.

Let $G$ be the underlying unimodular quasitransitive graph (such as a Cayley graph), with vertex set $V(G)=V$ and edge set $E(G)=E$. Denote by $d$ the (maximal) degree in $G$. Let $\dist (u,v)$ be the distance between $u$ and $v$ in $G$, where $u,v\in V\cup E$.
Denote by $B(x,r)$ the ball of radius $r$ around $x$ in $G$, and by $B_\Gamma (x,r)$ the ball of radius $r$ around $x$ in some given subgraph $\Gamma$ of $G$. Let $S(x,r)$ and $S_\Gamma (x,r)$ be the spheres of radius $r$ around $x$ in $G$ and in $\Gamma$ respectively.
Given some percolation (random subgraph) on $G$, the component of a given vertex $x$ will be denoted by $C_x$. (We hide the dependence of $C_x$ on $\omega$ for the ease of notation.) Given $e\in E$, $f\in E\cup\{\emptyset\}$ and a configuration $\omega\in 2^E$, let $\pii\omega:=\omega\cup e\setminus f$. For
an event $A$, let $\pii A:=\{\omega\cup e\setminus f\,:\, \omega\in A\}$. Denote the complement of a set $A$ within some superset (clear from the context) by $A^c$.
We will use $\P$ for different probability measures in the paper, but its meaning will be always clear from the context. We will use $\E$ for the expectation of $\P$.

A percolation is called insertion tolerant (see \cite{LS}), if one can insert a fixed edge to each configuration of a given event $A$ and obtain an event of positive probability after the insertion, provided that the original event had positive probability. The key property needed for our proofs is a weak form of insertion tolerance, as given in the next Definition. Informally, this is the following modification of the ``usual" notion of insertion tolerance. First, one may assume that the event $A$ is such that the endpoints of the edge $e$ are in distinct components on $A$. (This is not a real constraint, since in applications one usually wants to insert $e$ if it is between two components.) Then we can insert $e$ to the configurations in $A$, but at the cost of possibly deleting another edge $f$. Furthermore, this $f$ can be chosen for any fixed $r\geq 0$ to be at distance at least $r$ from $e$, and it can be chosen so that it is in the component of a previously fixed endpoint $x$ of $e$.

\begin{definition} \label{maindef} ({\bf weak insertion tolerance, WIT})
We say that a random forest $\F$ of a unimodular quasitransitive graph $G$ is weakly insertion tolerant (WIT) if 
for any $\{x,y\}=e\in E(G)$, $r$ nonnegative integer,  and configuration $\omega$ such that $x$ and $y$ are in different components,
there exists an $f=f(\omega,e,x,r) \in E(G)\cup\{\emptyset\}$ such that the following holds. Fixing $e,x,r$ and looking at $f$ as a function of $\omega$, it is measurable.
If $A$ is such that $\P (A)>0$ and for almost every configuration in $A$, $C_x\not = C_y$, then   $\P (\pii A)>0$. Furthermore, if $f\not=\emptyset$ then $f$ is in $C_x\cap B(x,r)^c$ almost surely.
\end{definition}

Suppose that $G$ is an infinite unimodular graph and $G_n\subset G$ is an exhausting sequence of connected finite graphs. Let $\hat G_n$ be obtained from $G_n$ by adding an extra vertex $z_n$ to it, and replacing every edge of the form $\{x,y\}\in E(G)$, $x\in V(G_n)$, $y\not\in V(G_n)$, by a copy of the edge $\{x,z_n\}$.

\begin{definition} \label{usf} ({\bf Uniform Spanning Forest})
Let $G$ be an infinite graph and $G_n\subset G$ be an exhausting sequence of connected finite graphs. Let $T_n$ be a uniformly chosen spanning tree of $G_n$, and $\hat T_n$ be a uniformly chosen spanning tree of $\hat G_n$. Pemantle showed that the weak limits of $T_n$ and of $\hat T_n$ exists \cite{P}. The first one is called the Free Uniform Spanning Forest (FUSF) of $G$, the second one is called the Wired Uniform Spanning Forest (WUSF) of $G$.
\end{definition}

\begin{definition} \label{msf} ({\bf Minimal Spanning Forest})
Let $G$ be an infinite graph and $\lambda$ be an i.i.d. labelling of its edges by Lebesgue[0,1] labels.
Delete each edge from $G$ if its label is maximal in some (finite) cycle of $G$, and call the remaining random forest $F_F(\lambda)$,  
the Free Minimal Spanning Forest (FMSF) of $G$.
Alternatively, delete each edge from $G$ if its label is maximal in some cycle or biinfinite path of $G$, and call the remaining random forest $F_W(\lambda)$, the Wired Minimal Spanning Forest (WMSF) of $G$.
\end{definition}

We mention that the WMSF and FMSF can equivalently be defined using an exhausting sequence of finite graphs, see \cite{LPS} for the details.

\begin{lemma} \label{lemmamain1}
The Free Uniform Spanning Forest and the Wired Uniform Spanning Forest are weakly insertion tolerant. Moreover, there exists a uniform $\delta(r)$ such that for any $A$, $e$ and $f$ as in Definition \ref{maindef}, $\P(\pii A)>\delta (r)\P(A)$. 
\end{lemma}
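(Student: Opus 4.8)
The plan is to establish weak insertion tolerance for the two versions of the uniform spanning forest separately, using in each case the spatial Markov property of the uniform spanning tree on the finite graphs $G_n$ (or $\hat G_n$) together with Wilson's algorithm, and then pass to the weak limit. Fix $e=\{x,y\}$, a radius $r$, and a configuration $\omega$ with $C_x\not=C_y$. The natural candidate for $f$ is the first edge on the unique path in the tree $C_x$ from $x$ to the "exit" of the ball $B(x,r)$, or more precisely an edge on $C_x$ incident to $S(x,r)$ along the $x$-side; if $C_x\subset B(x,r)$ one can take $f=\emptyset$ (after arguing that in that case inserting $e$ does not create a cycle). The function $f(\omega,e,x,r)$ defined this way is clearly measurable in $\omega$ for fixed $e,x,r$, and by construction, when nonempty, it lies in $C_x\cap B(x,r)^c$.

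The heart of the argument is the quantitative bound $\P(\pi_e^f A)>\delta(r)\P(A)$. I would first reduce to a local statement: conditioning on the configuration outside a large ball $B(x,R)$ (with $R$ much larger than $r$ and the diameter of the relevant portion of $C_x$), the law of the spanning tree inside is given by the UST on a finite contracted graph, by the spatial Markov property of USTs. On this finite graph, inserting $e$ and deleting $f$ is a bounded modification, and an application of the transfer-current theorem (or a direct Wilson's-algorithm coupling: run Wilson's algorithm starting with the loop-erased walk from $y$, forcing it to step across $e$ first) shows that the modified configuration has conditional probability at least a constant multiple, with the constant depending only on $r$ and the maximal degree $d$, of the original one. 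Averaging over the outside configuration and over which finite-ball event we land in gives the uniform $\delta(r)$. Finally, the fact that weak limits of $T_n$ and $\hat T_n$ exist and that these cylinder-event bounds are preserved under the weak limit yields the statement for FUSF and WUSF themselves.

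The main obstacle I anticipate is the bookkeeping around the edge $f$: one must verify that $\pi_e^f\omega$ is again a spanning forest (i.e., that deleting $f$ genuinely breaks the cycle that inserting $e$ would create, and that no extra component is created), and that the choice of $f$ on the component $C_x$ side can be made consistently as a measurable function while still landing outside $B(x,r)$ — this requires knowing that $C_x$, being a tree with (eventually, by hypothesis) infinitely many ends or at least being infinite in the relevant cases, actually reaches outside $B(x,r)$, or else handling the finite-component case by hand. A secondary technical point is making the Markov-property reduction clean for the free forest, where the limit is the weak limit of free (uncontracted) USTs; here one uses that the free UST on $G_n$ also enjoys the domain-Markov property and that Wilson's algorithm rooted at infinity / at the boundary adapts. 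Once these points are in place, the quantitative comparison on the finite graphs is a standard application of the matrix-tree / transfer-current estimates, and the passage to the limit is routine.
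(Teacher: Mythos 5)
Your overall plan (localize, compare a configuration with its modification, pass to the limit) is in the right spirit, but the step the whole argument rests on is not available. You condition the infinite-volume FUSF on the entire configuration outside $B(x,R)$ and assert, via the ``spatial Markov property,'' that the conditional law inside is the UST of a finite contracted graph. That identity holds for the UST of a finite graph, and it survives the free limit when one conditions on the statuses of \emph{finitely many} edges, but it fails for conditioning on the outside of a ball: the UST of the contracted graph is a spanning tree of that graph, so it would a.s.\ connect all contracted boundary components to one another through the ball, whereas distinct trees of the FUSF meeting $B(x,R)$ are never connected inside it. Since the relevant event is exactly one where $C_x\neq C_y$, i.e.\ at least two forest components meet the region, the conditional law cannot be the UST of the contracted graph, and no such infinite-volume Markov identification is known for the FUSF. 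The quantitative step is also shaky: Wilson's algorithm is not available for the FUSF when FUSF$\neq$WUSF, forcing the first walk across $e$ corresponds to conditioning on $e\in F$ rather than to the map $\pi_e^f$ with $f$ outside $B(x,r)$ that WIT demands, and it is unclear how a constant depending only on $r$ and $d$ would emerge from transfer-current estimates on a random contracted graph. Finally, your candidate $f$ (``first edge on the path from $x$ to the exit of the ball'') is not well defined in the infinite forest: $C_x\cap S(x,r)$ has in general many edges and there is no path to $y$ to single one out.

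The paper avoids all of this by working entirely at the level of the finite exhaustion, where your worry about ``breaking the cycle'' resolves itself: in the uniform spanning tree $T_j$ of $G_j$, $x$ and $y$ \emph{are} joined by a path $P_j$, and $f_j$ is defined as the first edge of $S(x,r+1)$ along $P_j$ from $x$. The map $\phi_j: T_j\mapsto T_j\cup e\setminus f_j$ sends spanning trees to spanning trees and has at most $|S(x,r+1)|$ preimages (one for each possible $f_j$), so under the uniform measure $\P(\phi_j(A_j))\geq |S(x,r+1)|^{-1}\P(A_j)$ --- a purely combinatorial bound, with no Markov property, transfer currents, or Wilson's algorithm. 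One then takes a Skorokhod coupling in which $T_j\to F$ a.s., defines $f$ as the a.s.\ limit of the $f_j$ (so $f\in C_x\cap S(x,r+1)$, measurable, outside $B(x,r)$), approximates $A\subset D$ by events $A_j\subset D_j$, and passes the bound to the limit, obtaining the uniform constant $\delta(r)=|S(x,r+1)|^{-1}$; the same argument with $\hat G_j$ handles the WUSF. If you want to salvage your approach, this finite-level counting is the ingredient that should replace the infinite-volume Markov reduction.
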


For a transitive graph $G$ and $p\in [0,1]$, denote by $\theta (p,G)$ the probability that $o$ is in an infinite component of Bernoulli($p$) edge percolation, where $o\in V(G)$ is some fixed vertex. Whether $\theta(p_c,G)=0$, is a central open problem, known to be true when $G$ is nonamenable, \cite{BLPSperc}. We mention that the situation FMSF$\not=$WMSF, which is studied below, can only happen when $G$ is nonamenable, \cite{LPS}.

\begin{lemma} \label{lemmamain2}
Let $G$ be such that $\theta(p_c,G)=0$. 
The Free Minimal Spanning Forest and the Wired Minimal Spanning Forest are weakly insertion tolerant.
\end{lemma}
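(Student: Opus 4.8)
\medskip\noindent\textbf{Proof proposal.}
The plan is to realise the insertion of $e=\{x,y\}$ by perturbing the i.i.d.\ labelling $\lambda$ underlying the forest. Write $\omega=F_F(\lambda)$; the wired case is discussed at the end. If $e\in\omega$ take $f=\emptyset$, so that $\pi_e^f$ is the identity and there is nothing to prove; hence assume $e\notin\omega$, which by the membership criterion ``$e'\in F_F(\lambda)$ iff the endpoints of $e'$ lie in distinct clusters of $\{\lambda<\lambda(e')\}$'' means that $x$ and $y$ are joined by a finite path with all labels below $\lambda(e)$. Let $\tau=\tau(\lambda)$ be the least level $t$ at which $x$ and $y$ lie in a common cluster of $\{\lambda<t\}$, so $\tau\le\lambda(e)$. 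Since $x$ and $y$ are in different components of $\omega$, the hypothesis $\theta(p_c,G)=0$ — which makes every cluster of $\{\lambda\le p_c\}$ finite, hence spanned by a single tree of $F_F(\lambda)$ — forces $\tau>p_c$: otherwise the (finite) clusters of $x$ and of $y$ in $\{\lambda<\tau\}$ would each lie in one FMSF tree and the single FMSF edge joining them at level $\tau$ would put $x$ and $y$ in the same tree, contradicting $C_x\ne C_y$.

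The basic move is to fix $s<p_c$ and let $\lambda_s$ agree with $\lambda$ except that $\lambda_s(e)=s$. Running the membership criterion threshold by threshold (i.e.\ Kruskal's algorithm) one sees that $F_F(\lambda_s)$ differs from $\omega$ in exactly two edges: $e$ enters, and exactly one edge $g$ leaves. Indeed, no edge can enter (besides $e$), since lowering a label only enlarges every set $\{\lambda<t\}$; and an edge $e'=\{u,v\}$ can leave only if at its own threshold $\lambda(e')$ the lowered edge $e$ joins the cluster of $u$ to that of $v$, i.e.\ $x$ is joined to $u$ and $y$ to $v$ through labels $<\lambda(e')$. But once the smallest-label such edge $g$ has been processed, the clusters of $x$ and $y$ have already merged in the \emph{unperturbed} configuration — which is precisely why $\lambda(g)=\tau$ — so no later edge between them survives in $F_F(\lambda)$. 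Thus $F_F(\lambda_s)=\omega\cup e\setminus g$, where $g$ is the unique ``merging edge'' of $x$ and $y$: it lies in $\omega$, has $\lambda(g)=\tau>p_c$, and has one endpoint in the level-$\tau$ cluster of $x$ and one in that of $y$.

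What remains, and is the crux, is to pass from this to weak insertion tolerance, where the deleted edge must be a \emph{prescribed} function of $\omega$ lying in $C_x\cap B(x,r)^c$: the edge $g$ above is determined by $\lambda$, not by $\omega$, and need neither lie in $C_x$ nor be far from $x$. I would let $f=f(\omega,e,x,r)$ be a canonical edge of $C_x$ at distance $\ge r$ from $x$ — say, the first such edge met in a breadth-first search of $C_x$ started at $x$ relative to a fixed enumeration of $V$ — which is a measurable function of $\omega$ lying in $C_x\cap B(x,r)^c$ (the FMSF components being a.s.\ infinite, such an edge exists). Then, given $A$ with $\P(A)>0$ on which $C_x\ne C_y$, I would show $\P(\pi_e^f A)>0$ by composing the move above with a second perturbation that additionally raises $\lambda(f)$ and re-routes the competing low-label connections between the two sides of $C_x\setminus f$ so that the merging edge becomes exactly $f$, while producing no other change in the forest. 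Here $\theta(p_c,G)=0$ is used again: it confines the re-routing to the finite clusters at level $\le p_c$, so only finitely many labels are altered; disintegrating over the labels of the other edges and running the standard Lebesgue-density argument (as for ordinary insertion tolerance) then yields a positive-probability set of labellings whose free minimal spanning forest lies in $\pi_e^f(A)$. The number of labels altered depends on the random size of these clusters, so one does not expect a bound uniform in the configuration, consistent with the absence of a ``moreover'' clause in the statement.

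For the wired forest the same scheme applies with $F_F$ replaced by $F_W$, now using ``$e'\notin F_W(\lambda)$ iff $\lambda(e')$ is maximal on a finite cycle \emph{or} on a bi-infinite path''; in analysing both perturbations one must additionally check that no relevant bi-infinite path is created or destroyed except through the intended swap, and since $\theta(p_c,G)=0$ excludes bi-infinite paths inside $\{\lambda\le p_c\}$ this extra bookkeeping is again localised. I expect the main obstacle, in either case, to be exactly this ``no side effects'' requirement: the minimal spanning forest is not a local function of $\lambda$, so altering even two labels can a priori set off an unbounded cascade of edges entering and leaving; the substance of the proof is that under $\theta(p_c,G)=0$ the cascade is confined to a finite region and realises exactly the one intended deletion, and in particular that the second perturbation can be arranged so that this deletion is the prescribed $f\in C_x\cap B(x,r)^c$ rather than the $\lambda$-dependent edge $g$.
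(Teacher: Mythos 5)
The mechanical half of your argument is fine and coincides with the paper's Proposition \ref{labels}: lowering $\lambda(e)$ below the minimax level $\tau=Z_\lambda(e)$ inserts $e$ and expels (at most) the single bottleneck edge $\phi(e,\lambda)$, and lowering labels of edges already in the forest leaves the forest unchanged. The gap is exactly at what you call the crux, and your proposed resolution cannot work. You fix $f$ in advance as a canonical (BFS-first) edge of $C_x$ at distance $\geq r$ and hope to force it to be the expelled edge by a ``second perturbation''. But $\pi_e^f A$ must have positive probability under the forest measure itself, and this puts configuration-dependent constraints on which $f$ can possibly serve: all FMSF/WMSF trees are a.s.\ infinite, so if the canonical $f$ cuts off a finite branch of $C_x$ (which happens with positive probability for a choice made only from a fixed enumeration of $V$), then every configuration in $\pi_e^f A$ has a finite tree and $\P(\pi_e^f A)=0$; and for the WMSF under $\theta(p_c,G)=0$ every tree is one-ended, so unless $f$ lies on the unique infinite ray from $x$ (beyond $B(x,r)$), the component of $x$ in $\omega\cup e\setminus f$ has two ends, again an event of probability zero. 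Hence the deleted edge cannot be prescribed canonically; it is forced by the configuration and the labels. This is precisely why the paper builds $f$ by the iterative bottleneck procedure ($f_1,f_2,\dots$, each the inf-sup edge after discarding the previous ones), lowers only the labels of the forest edges $f_1,\dots,f_{k-1}$ (so the forest is untouched), and then needs the one-endedness of WMSF trees together with $\theta(p_c,G)=0$ in a nontrivial limiting argument to show the procedure terminates at some $f_k\in C_x\cap B(x,r)^c$.

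Independently of the choice of $f$, the ``second perturbation'' is not actually constructed, and the one sentence describing it aims at the wrong connections. To make a prescribed $f=\{u,v\}\in C_x$ the bottleneck between $x$ and $y$ you must produce a path from the far endpoint of $f$ to $y$ with all labels below $\lambda'(f)$ while ensuring $x$ and $y$ are not connected below $\lambda'(f)$; the competing $x$--$y$ connections live at levels near $\tau>p_c$ (as you yourself proved), not inside the subcritical clusters, so $\theta(p_c,G)=0$ does not localize this re-routing, and lowering labels of non-forest edges along such a path is exactly the kind of move that triggers uncontrolled changes of the forest. So the strategy ``perturb the labelling'' is the right one, but the key step --- identifying a deletable edge in $C_x\cap B(x,r)^c$ --- is missing, and with your canonical choice of $f$ it is false rather than merely unproved.
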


\begin{corollary} \label{instolMSF}
Suppose FMSF$\not=$WMSF. Then the FMSF and the WMSF are weakly insertion tolerant.
\end{corollary}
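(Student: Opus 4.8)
The plan is to deduce the corollary from Lemma \ref{lemmamain2} by showing that the hypothesis FMSF $\neq$ WMSF already forces $\theta(p_c,G)=0$. Recall from \cite{LPS} that FMSF $\neq$ WMSF is equivalent to $p_c<p_u$. So the first step is to reduce to the statement: if $p_c<p_u$ for a unimodular quasitransitive graph $G$, then $\theta(p_c,G)=0$. This is exactly the content of the fact (cited in the excerpt, \cite{BLPSperc}, and originally due to Benjamini--Lyons--Peres--Schramm) that percolation at criticality has no infinite cluster when $G$ is nonamenable — combined with the remark, already quoted above from \cite{LPS}, that FMSF $\neq$ WMSF can only occur when $G$ is nonamenable. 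So the chain is: FMSF $\neq$ WMSF $\Rightarrow$ $G$ nonamenable $\Rightarrow$ $\theta(p_c,G)=0$.

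Once $\theta(p_c,G)=0$ is established, I would simply invoke Lemma \ref{lemmamain2}, which states precisely that under this hypothesis both the FMSF and the WMSF are weakly insertion tolerant. That completes the proof. I would write this out as: ``By \cite{LPS}, FMSF $\neq$ WMSF implies $G$ is nonamenable, hence $\theta(p_c,G)=0$ by \cite{BLPSperc}; now apply Lemma \ref{lemmamain2}.''

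The only subtle point — the ``main obstacle'', though it is really just a matter of correctly quoting the literature rather than proving something new — is making sure the nonamenability implication is stated for the right class of graphs. The result $\theta(p_c)=0$ for nonamenable unimodular (quasi)transitive graphs is standard, and the implication FMSF $\neq$ WMSF $\Rightarrow$ nonamenable is exactly the remark made just before Lemma \ref{lemmamain2} in the excerpt. One should double-check that ``$p_c<p_u$ implies nonamenable'' holds in the full generality of unimodular quasitransitive graphs (it does, by \cite{LPS}), and that the criticality result \cite{BLPSperc} is likewise available in that generality. Given those citations, the corollary is immediate and needs no further argument.
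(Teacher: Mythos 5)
Your argument is correct and is exactly the one the paper intends: the two sentences preceding Lemma \ref{lemmamain2} record that FMSF$\not=$WMSF forces $G$ to be nonamenable (by \cite{LPS}) and that nonamenability gives $\theta(p_c,G)=0$ (by \cite{BLPSperc}), after which the corollary is immediate from Lemma \ref{lemmamain2}. No gaps; your route coincides with the paper's.
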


A standard tool in the study of percolations on transitive graphs is the so-called Mass Transport Principle (MTP). In brief, it says that if $x$ sends mass $\phi (\omega , x,y)$ to $y$ and this mass transport function is diagonally invariant, then the expected total mass $\E\sum_y \phi (\omega , x,y)$ sent out by $x$ is the same as the expected total mass $\E\sum_y \phi (\omega ,y,x)$ received by $x$. The most typical use of the MTP is that there is no way to assign some vertex to each vertex in an invariant way such that some vertex is assigned to infinitely many other vertices
with positive probability.
See Section 8.1 in \cite{LP} for more details and the history of the MTP.


\section{Uniform and Minimal Spanning Forests are WIT}

The perhaps most important examples of weakly insertion tolerant forests are the Uniform and the Minimal Spanning Forests. For the latter case we are only able to prove WIT if we assume $\theta(p_c,G)=0$, although a weaker version of WIT holds in full generality, namely, if we do not require $\ff$ to be in $C_x$. See Remark \ref{WIT_pocsoles} for an explanation of what benefits and losses there would have been of such an alternative definition of WIT. 

\begin{proofof}{Lemma \ref{lemmamain1}}
Fix $e=\{x,y\}$. Denote a random forest on $G$ by $F$. First consider the case of FUSF.

Fix $r$. Let $G_j$ be an exhaustion of $G$ by finite graphs, and $T_j$ be the FUSF
of $G_j$. We may assume that every $G_j$ contains the ball $B(x,r+1)$ of radius $r$ around $x$. 
Denote the path from $x$ to $y$ in $T_j$ by $P_j$. 
Let $D_j:=\{x$ and $y$ are in different components of $\{B(x,r+1)\cap T_j\}\}$, and let $D:=\{x$ and $y$ are in different components of $\{B(x,r+1)\cap F\}\}$. By the convergence of $T_j$ to $F$, $\P (D_j)$ converges to $\P (A)$. 

Recall that one can define a metric on (rooted equivalence classes of) rooted graphs, where the distance between rooted graphs $(\Gamma_1,o_1)$ and $(\Gamma_2,o_2)$ is $1/r$ if $r$ is the maximal integer such that the $r$-neighborhoods of $o_1$ in $\Gamma_1$ and $o_2$ in $\Gamma_2$ are rooted isomorphic. This metric defines a Polish space. By the Skorokhod representation theorem,
the weak convergence of $T_j$ to $F$ implies the existence of a coupling between $(T_n)_n$ and $F$ such that $T_n$ converges to $F$ a.s. 
Conditioned on $D_j$, $P_j\cap S(x,r+1)\not=\emptyset$. Let $f_j$ be the first edge of $S(x,r+1)$ when going along $P_j$ starting from $x$. Since $T_j$ converges to $F$, its restriction to $B(x,r+1)$ also converges to that of $F$ a.s. In particular, $f_j$ has a limit as $j\to\infty$. Let $f$ be the random edge given by this limiting distribution. Note that $f\in C_x\cap S(x,r+1)$. 

Now assume that $F$ is given by the FUSF. Under the map $\phi_j :T_j\mapsto T_j\cup e\setminus f_j$, every configuration has at most $|S(o,r+1)|$ preimages (at most one for each potential $f_j\in S(o,r+1)$). It follows that for any event $A_j\subset D_j$, $\P(\phi_j(A_j))\geq |S(o,r+1)|^{-1}\P(A_j)$. For any $A\subset D$ one can choose an approximating sequence $A_j\to A$, $A_j\subset D_j$. Hence $\P (\pii A)=\lim_j \P (\phi_j (A_j))\geq |S(o,r+1)|^{-1}\lim _j\P(A_j)=|S(o,r+1)|^{-1}\P(A)$.
This finishes the proof for FUSF.

Similar argument works for the WUSF with $G_j$ replaced by $\hat G_j$.

\end{proofof}
\qed

Given a labelling $\lambda : E(G)\to [0,1]$ and $e\in E(G)$, 
define
$Z_\lambda (e)=Z(e)=
\inf_C \sup \{\lambda (e'):\, e'\in C\setminus \{e\}\}$, where the infimum is over all cycles $C$ in $G$ that contain $e$. Depending on the context, by a cycle we may mean only finite cycles (in case of the FMSF) or finite cycles and biinfinite paths (in case of WMSF). If the infimum of the sup is attained in the definition of $Z(e)$, denote the edge $e'$ by $\phie$ (then $\lambda (\phie)=Z(e)$), otherwise let $\phie=\emptyset$.

In the edge labellings considered below, we assume that all labels are different. This holds with probability 1 when the labels are i.i.d. Lebesgue$[0,1]$. For the proof of Lemma \ref{lemmamain2}, we will need the following observation. 

\begin{proposition}\label{labels}
Let $e\in E(G)$, $\lambda :E(G)\to [0,1]$ be a labelling and $\lambda'$ be another labelling that agrees with $\lambda$ for every edge other than $e$.
\begin{enumerate} 
\item Suppose that $e\in E(G)$ is in $F_F(\lambda)$. If $\lambda '(e)<Z_\lambda (e)$ then $F_F(\lambda)=F_F(\lambda')$. If $\lambda'(e)>Z_\lambda(e)$ then
$F_F(\lambda')=\{\phie\}\cup F_F(\lambda)\setminus \{e\}$.
\item Suppose that $e\in E(G)$ is not in $F_F(\lambda)$. If $\lambda '(e)>Z_\lambda (e)$ 
then $F_F(\lambda)=F_F(\lambda')$.  If $\lambda '(e)< Z_\lambda(e)$ 
$F_F(\lambda')=\{e\}\cup F_F(\lambda)\setminus \{\phie\}$.
\end{enumerate}
Statements similar to (1) and (2) hold with $F_F$ replaced by $F_W$ above (with $Z_\lambda $ changed to the free version).
\end{proposition}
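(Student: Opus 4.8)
The plan is to analyze how the Free Minimal Spanning Forest changes when we perturb the label of a single edge $e$, using the characterization of $F_F(\lambda)$ as the set of edges $e$ that are \emph{not} maximal in any finite cycle, equivalently the set of $e$ with $\lambda(e) < Z_\lambda(e)$ (and $e$ gets deleted exactly when $\lambda(e) > Z_\lambda(e)$; the boundary case $\lambda(e)=Z_\lambda(e)$ has probability zero and is excluded since all labels are distinct). The key point to isolate first is: for any edge $e'\neq e$, whether $e'$ is maximal in some finite cycle, i.e.\ whether $\lambda(e') > Z_{\lambda}(e')$, depends on the label of $e$ only through cycles $C$ that contain both $e'$ and $e$, and in that case only through whether $\lambda(e)$ sits above or below the relevant threshold. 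So I would first record the elementary monotonicity fact that $Z_\lambda(e')$ as a function of the single coordinate $\lambda(e)$ is nondecreasing, and is constant on the intervals $(-\infty, Z_\lambda(e))$ and $(Z_\lambda(e), \infty)$ of possible values of $\lambda(e)$ — because lowering $\lambda(e)$ below $Z_\lambda(e)$ cannot make $e$ relevant to the sup over any cycle (there is always a "better", i.e.\ lower-max, cycle avoiding $e$ or a cycle where another edge already exceeds $\lambda(e)$), while raising it above $Z_\lambda(e)$ only matters through the cycle(s) witnessing $Z_\lambda(e)$.

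Next I would treat the four cases. In case (1), $e\in F_F(\lambda)$, so $\lambda(e)<Z_\lambda(e)$. If $\lambda'(e)<Z_\lambda(e)$ as well, then by the constancy observation no $Z$-value of any edge changes, so the deletion rule selects exactly the same edges and $F_F(\lambda')=F_F(\lambda)$. If instead $\lambda'(e)>Z_\lambda(e)$: now $e$ becomes maximal in the cycle $C$ witnessing $Z_\lambda(e)$ (so $e$ is deleted), and by the choice of $\phi(e,\lambda)$ as the (unique, by distinctness) edge attaining $\sup\{\lambda(e'):e'\in C\setminus\{e\}\}$, the edge $\phi(e,\lambda)$ was previously deleted (it was maximal in $C$) but is no longer maximal in $C$ once $\lambda'(e)$ exceeds it; one checks it is not maximal in any other cycle either — any cycle through $\phi(e,\lambda)$ not through $e$ had the same max before and after, and any cycle through both can be "rerouted" through $C$ to avoid $\phi(e,\lambda)$ without raising the max. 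All other edges keep their status. Hence $F_F(\lambda') = \{\phi(e,\lambda)\}\cup F_F(\lambda)\setminus\{e\}$. Case (2) is symmetric: $e\notin F_F(\lambda)$ means $\lambda(e)>Z_\lambda(e)$; raising it further changes nothing, while lowering it below $Z_\lambda(e)$ puts $e$ into the forest and, by the dual rerouting argument, ejects exactly $\phi(e,\lambda)$ — the edge that now becomes maximal in the cycle that formerly witnessed $Z_\lambda(e)$.

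For the final sentence, the same argument applies verbatim to $F_W$, replacing "finite cycle" by "finite cycle or biinfinite path" throughout and using $Z_\lambda$ in its wired version (infimum over this larger family), since nothing above used finiteness of the cycles beyond the fact that each cycle/path has a well-defined maximal label edge among its edges other than $e$ — which I would note requires a small remark when the sup over a biinfinite path is not attained, but this does not affect whether $e$ or $\phi(e,\lambda)$ is maximal in the specific witnessing configuration.

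The main obstacle I anticipate is the "rerouting" step: verifying that $\phi(e,\lambda)$ (resp.\ the newly-maximal edge, in case (2)) is genuinely restored to / removed from the forest, i.e.\ that it is not independently maximal in some \emph{other} cycle. This requires the combinatorial observation that if $\phi(e,\lambda)$ lies in a cycle $C'$ with $\phi(e,\lambda)$ maximal in $C'\setminus\{\phi(e,\lambda)\}$ and $C'$ also meets $e$, then the symmetric difference (or an appropriate splicing) of $C'$ with the witnessing cycle $C$ yields a cycle avoiding $\phi(e,\lambda)$ whose edges all have labels below $\lambda'(e)$ — contradicting that $e$ was chosen to be in the forest for $\lambda'$. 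Making the splicing precise (one genuinely gets a cycle, not just an edge set, after deleting $\phi(e,\lambda)$ from $C\triangle C'$) is the one place where a careful argument, rather than a one-line remark, is needed.
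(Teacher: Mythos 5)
The paper does not actually prove this proposition from first principles: it simply cites the proof of Lemma~3.15 in \cite{LPS}, which establishes exactly the two facts that are needed, namely that $\lambda|_{E\setminus\{e\}}$ determines $F(\lambda)|_{E\setminus\{e,\phi(e,\lambda)\}}$, and that exactly one of $e$ and $\phi(e,\lambda)$ lies in $F(\lambda)$ according to whether $\lambda(e)<\lambda(\phi(e,\lambda))$ or not. Your proposal instead gives a self-contained combinatorial argument via symmetric differences of cycles. That is a genuinely different route, and the central ``rerouting'' step you isolate (splicing a witnessing cycle $C$ for $Z_\lambda(e)$ with a competing cycle $C'$ through $\phi(e,\lambda)$ to produce a cycle avoiding $\phi(e,\lambda)$ with smaller max label) is indeed the right idea and does go through. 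The cost is that you must also check, for every edge $e''\notin\{e,\phi(e,\lambda)\}$, that its in/out status in $F_F$ does not flip; you gesture at this but it requires its own small rerouting argument in the case $e''\notin F_F(\lambda)$, $e\in C''$, and in the case $\lambda'(e)>Z_\lambda(e)$.

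One concrete error: the auxiliary claim that ``$Z_\lambda(e')$ as a function of $\lambda(e)$ is constant on the intervals $(-\infty,Z_\lambda(e))$ and $(Z_\lambda(e),\infty)$'' is false. Take a triangle with edges $e,e',e''$ carrying labels $0.3,0.5,0.1$ and no other cycle through any of them. Then $Z_\lambda(e)=0.5$, $Z_\lambda(e')=0.3$; raising $\lambda(e)$ to $0.4$ (still below $Z_\lambda(e)$) changes $Z_\lambda(e')$ to $0.4$. What \emph{is} invariant on each interval is the comparison $\lambda(e')\gtrless Z_\lambda(e')$, i.e.\ the membership of $e'$ in $F_F$, and that is the statement you actually need. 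Proving it does not follow ``by the constancy observation'' but needs the monotonicity of $Z$ in $\lambda(e)$ together with a short argument: if $e'$ is maximal in some cycle $C'\ni e$, then $Z_\lambda(e)\le\max_{C'\setminus\{e\}}\lambda=\lambda(e')$, so any $\lambda'(e)<Z_\lambda(e)$ is still below $\lambda(e')$ and $e'$ remains maximal in $C'$. So your proof is repairable and ultimately establishes the proposition, but the stated premise of the first paragraph should be replaced by the status-preservation statement, not the (false) constancy of $Z$-values.
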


\begin{proof}
The proposition follows from the proof of Lemma 3.15 in \cite{LPS}. There it is shown that $\lambda |_{E\setminus\{e\}}$ determines $F(\lambda) |_{E\setminus\{e,\phi (e,\lambda)\}}$ (note $\phi (e,\lambda)$ is determined by $\lambda |_{E\setminus\{e\}}$), and that one has either $e\in F(\lambda)$, $\phi (e,\lambda )\not\in F(\lambda)$ (iff $\lambda (e)<Z(e)=\lambda (\phi (e,\lambda ))$ or if $\phi (e,\lambda )=\emptyset$), or $e\not\in F(\lambda)$, $\phi (e, \lambda)\in F(\lambda)$ (iff $\lambda (e) > \lambda (\phi (e,\lambda ))$). The Proposition follows from these.
\end{proof}
\qed

\begin{proofof}{Lemma \ref{lemmamain2}}
First we show the claim for the WMSF. In \cite{HPS} it is proved that in case of $\theta(p_C,G)=0$, every component of the WMSF has one end. (In general, WMSF components can have at most two ends.)

Let $\lambda$ be the random labelling and the corresponding spanning forest be $F(\lambda)=F_W(\lambda)$. 
Given $e\in E(G)$, we will use ${\cal C}(e)$ for the set of all cycles containing $e$, where cycles are understood to be finite or biinfinite paths.
Fix $e$, $r$ and $A$ as in Definition \ref{maindef}. Recall the definition of $f(\omega,e,x,r)$, and note that $\omega$ here stands for a realization $F(\lambda)$. Condition on $A$, i.e., suppose that $F(\lambda)\in A$.

To prove the claim, for any labelling $\lambda$ we will define a labelling $\lambda'$ whose properties are explained next. We will have $F(\lambda)=F(\lambda')$.
If we change $\lambda' (e)$ to a $\lambda'' (e)<Z_{\lambda'}(e)$ leaving all other labels unchanged, then Proposition \ref{labels} applies, and hence 
$F(\lambda'')=F(\lambda')\cup e\setminus \phi (e,\lambda')$. Now, $\lambda'$ will have the property that if $\phi (e,\lambda')\not=\emptyset$ then it satisfies the requirements for $f(F(\lambda'),e,x,r)$: its distance from $x$ is at least $r$, and it is in $C_x$.
Furthermore, the map $\lambda\to\lambda'$ will be measurable and it will take sets of positive probability to sets of positive probability. This will prove the Lemma.

Let $Z_1(e):=Z(e)$ and $f_1:=\phie$. Let $i\in \Z^+$ and suppose that $Z_1(e),\ldots , Z_{i-1}(e)$ have been defined and that $i<k$, where $k$ is to be determined later. Then define $Z_i(e):=\inf_{C\in {\cal C}(e)}\sup \{\lambda (e')\,:\, e'\in C\cap E(G)\setminus\{f_1,\ldots, f_{i-1}\}\}$ and let $f_i$ be the edge where this inf sup is attained, if there is any, otherwise let $f_i:=\emptyset$.

Let $k$ be the smallest number such that $f_k\not\in C_y\cup (B(x,r)\cap C_x)$ (including the case when $f_k=\emptyset$). 
If such a $k$ does not exist, define $k$ to be infinity. The set $C_y\cup (B(x,r)\cap C_x)$ does not contain any element of $C(e)$ as a subset, thus
the sup in the definition of $Z_i (e)$ is always taken over some nonempty set as long as $i\leq k$.)

Suppose first that $k$ is finite. Define the following labelling $\lambda'$ from $\lambda$:

(i) $\lambda'(f_i):=\lambda (f_i)Z_k(e)$ for all $i<k$,

(ii) leave all other labels unchanged.

By the choice of $k$, in (i) we only decrease labels of edges in $F(\lambda)$, hence the minimal spanning forest is not changed by these changes of labels, by Proposition \ref{labels}. It is easy to check that the map $\lambda\to\lambda'$ is measurable and it takes sets of positive probability to sets of positive probability. In the new labelling $\lambda'$ we have $\phi (e,\lambda')=f_k$. This implies $f_k\in C_x\cup C_y$, for the following reason. If we decrease the label of $e$ in $\lambda'$ (as in the definition of $\lambda''$ above), so that it becomes part of the forest ($\lambda'(e)<Z_k(e)$), the edge $\phi (e,\lambda')=f_k$ that drops out of the forest is outside of $C_x\cup C_y\cup \{e\}$. Then $C_x\cup C_y\cup\{e\}$ is one of the new components, with two ends, a contradiction.
We conclude using the definition of $k$ that if $k$ is finite then $\phi (e,\lambda')=f_k$ is in $C_x\cap B(x,r)^c$, as we wanted.

Finally, suppose that $k$ is infinity. Then for some $K>0$ we have $f_i\in C_y$ for every $i>K$ (where $K=|B(x,r)|$). 

For $\alpha\in [0,1]$, define $G_\alpha=G_\alpha (\lambda)=\{e\in E(G)\,:\,\lambda (e)<\alpha\}$. Let $i\geq 1$ be arbitrary. Let $O_i$ be a cycle such that $f_i$ is maximal in $O_i\setminus\{e,f_1,\ldots, f_{i-1}\}$. It
has the property that $O_i\setminus\{e,f_1,\ldots, f_{i-1}\}\subset G_{\lambda(f_i)}$ a.s.. By definition of $f_i$, there exists such a cycle with $e\in O_i$. It follows from the choice of 
$O_i$ (also applied to $O_{i-1}$) that
$$\lambda (f_{i})\leq \max \left\{\lambda (f)\,:\, f\in O_i\setminus\left\{e,f_1,\ldots, f_{i-1}\right\}\right\}=\inf_{C\in {\cal C}(e)}\sup \left\{\lambda (e')\,:\, e'\in C\cap E(G)\setminus\{f_1,\ldots, f_{i-1}\}\right\}$$
$$\leq \max \left\{\lambda(f)\,:\,f\in O_{i-1}\setminus \{e,f_1,\ldots,f_{i-1}\}\right\}<\lambda(f_{i-1}).$$ 
Note that for the second inequality we need that $O_{i-1}\setminus\{e,f_1,\ldots,f_{i-1}\}\not=\emptyset$, but this is guaranteed by the fact that $f_1,\ldots,f_{i-1}\in C_x\cup C_y$ and that $|O_{i-1}\setminus(C_x\cup C_y)|\geq 2$.
We obtained that $\lambda$ is monotone decreasing on the sequence $f_i$.
By this monotonicity, if a cycle in $G_{\lambda(f_j)}\cup\{e,f_1,\ldots, f_{j-1}\}$ contains $e$ and $f_j$, then it also contains $f_1,\ldots, f_{j-1}$. Otherwise, if $O$ is such a cycle and $i$ is the smallest index with $f_i\not\in O$, then an element of $\{f_{i+1},\ldots, f_j\}\cap O$ would have been chosen for $f_i$ by definition.

we would contradict the choice of the $f_i$. Next we show that all the $f_i$ ($i>K$) are on the infinite ray from $y$ in $C_y$. We have just observed that if $e,f_i\in O_i$ and $\lambda (f_i)$ is maximal in $O_i\setminus\{e,f_1,\ldots, f_{i-1}\}$, then $e,f_1,\ldots, f_i\in O_i$. By assumption, all the $f_j$ are in $C_y$ for $j>K$. The path $P$ between $f_j$ and $f_{j+1}$(not containing $f_j$ and $f_{j+1}$) in $C_y$ minimizes the max of labels among all such paths, because for any other subpath $P'$ between them, $P\cup P'$ is a cycle, with the edge of maximal label not being in $P\subset F(\lambda)$, hence the max of $\lambda$ over $P\cup P'$ is attained in $P'$. Therefore we may choose $O_i$ such that the subpath of $O_i$ between $f_j$ and $f_{j+1}$ that does not contain $e$ is inside $C_y$ (for any $O_i$, we can replace the subpath between $f_j$ and $f_{j+1}$ by $P$, and $f_i$ is still maximal in $O_i\setminus\{e,f_1,\ldots, f_{i-1}\}$). To summarize: we have seen that for $j\geq i>K$, all the $f_i$ are on a subpath of $O_i$ within $C_y$. Letting $j$ tend to infinity we obtain that all the $f_i$ ($i>K$) are on the infinite ray from $y$ in $C_y$.

We have seen that for $i>K$, $f_i$ has maximal label on the infinite ray from $f_i$ in $C_y$, hence
necessarily $\lambda (f_i)\geq p_c$.
For every $p>p_c$ every component of the WMSF intersects the cluster $G_p(\lambda)$ in an infinite component (Lemma 3.11 in \cite{LPS}). This implies $$\lim \lambda (f_i)=p_c,$$ 
using again that the trees of the WMSF are 1-ended. Take a subsequential limit of the $O_i$, call the resulting biinfinite path $B$. All the $f_i$ are on one side of $e$ in $B$ for $i>K$, hence there is an infinite path $P\subset B$ such that $P\cap\{e,f_1,f_2,\ldots\}=\emptyset$. Take an arbitrary edge $g\in P$. on the other side of $e$. If $\lambda (g)>p_c$, then if $i$ is large enough then $f_i$ we have $\lambda (f_i)<\lambda (g)$. By definition of $B$, if $i$ is large enough, then further $O_i$ contains $g$. But this contradicts the choice of $f_i$, because then $\lambda (f_i)$ is not maximal in $O_i\setminus \{e,f_1,\ldots, f_{i-1}\}$. We conclude that every $g\in P$ has $\lambda (g)\leq p_c$, contradicting $\theta (p_c)=0$. 
This final contradiction shows that
$k$ cannot be infinite, and the proof is finished.

The case of FMSF follows from the previous proof: note that we constructed $\lambda'$ by lowering the labels of some edges in $F_W(\lambda)\subset F_F(\lambda)$. Hence $F_W(\lambda')=F_W(\lambda)\subset F_F(\lambda)=F_F(\lambda')$. When reducing the label of $e$ in $\lambda'$ (below $Z_{{\rm Free}}(e):=\inf_C \sup \{\lambda (e'):\, e'\in C\setminus \{e\}\}$, $C$ finite cycle containing $e$), then $e$ becomes part of the forest, and either no edge drops out of it, or the edge that drops out is $\phi (e,\lambda')$, which satisfies the requirements for $f(F(\lambda'),e,x,r)$.

\end{proofof}
\qed


\begin{remark}\label{WIT_pocsoles}
In the definition of WIT, the requirement that $f\in C_x$ is needed only for the proof of Theorems \ref{ends} and \ref{comps}. Theorem \ref{indist} is true without this assumption, if we know that the conclusion of Theorem \ref{ends} holds. Along the lines of the above proof one could show that WMSF and FMSF are weakly insertion tolerant without the assumption $\theta(p_c,G)=0$, if we had chosen the less restrictive form of weak insertion tolerance, where $\ff$ need not be in $C_x$.

\end{remark}

\section{Number of components, number of ends} \label{s.main}

\begin{theorem} \label{ends}
Let $G$ be a unimodular quasitransitive graph, and ${\cal F}$ an ergodic random spanning forest of $G$. If ${\cal F}$ satisfies weak insertion tolerance and one of its components has infinitely many ends then every component has infinitely many ends.
\end{theorem}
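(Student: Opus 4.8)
The plan is to argue by contradiction using the Mass Transport Principle. Suppose some component has infinitely many ends but, with positive probability, some component has finitely many ends; by ergodicity and invariance, these are "global" events, so $\F$ has a.s. at least one infinitely-ended tree and at least one finitely-ended tree, and each type occurs with positive density of vertices. The idea is to use weak insertion tolerance to add an edge $e$ connecting a vertex $x$ in an infinitely-ended component $C_x$ to a vertex $y$ in a finitely-ended component $C_y$, chosen so that the compensating edge $f=f(\omega,e,x,r)$ that may be deleted lies deep inside $C_x$ (at distance at least $r$ from $e$, and \emph{in} $C_x$ — this is exactly the clause in Definition \ref{maindef} that Remark \ref{WIT_pocsoles} flags as essential here). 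Adding $e$ merges $C_x$ and $C_y$ into a single tree, and deleting $f$ from the interior of $C_x$ splits off only a finite or "small" piece relative to the ends structure; the point is that a tree with infinitely many ends, after deleting one edge far from $e$, still has at least one side with infinitely many ends, so the merged-and-repaired tree still has infinitely many ends. Thus in the modified configuration $\pii\omega$, the vertex $y$ — which had finitely many ends — now lies in an infinitely-ended tree.

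The contradiction is then extracted via an invariant assignment. In $\pii\omega$ one can canonically identify the edge $e$ that was inserted (it is the unique edge whose removal disconnects a specified infinitely-ended tree into two infinite pieces in a prescribed way, or more carefully: run the reverse of the WIT operation, which is deterministic given $e$), and hence identify the vertex $y$ and the finitely-ended tree it used to belong to. Now set up a mass transport: in the configuration $\pii\omega$, have each vertex $v$ send mass to the "culprit" vertex $y$ of the WIT surgery that could have produced it. Since $\P(\pii A)>0$ but each application of WIT with a fixed $e$ involves one edge $e$ and one endpoint $x$, a single target vertex $y$ receives mass from at most a bounded-in-expectation set — whereas the newly-merged infinitely-ended tree contributes, through its end structure, a way to route infinite mass to finitely many targets. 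More concretely, the standard trick is: if $C_y$ had finitely many ends and now sits inside an infinitely-ended tree because of a \emph{single} added edge in $C_x\cap B(x,r)^c$, then by the MTP the expected number of vertices "blaming" any given vertex is finite, but ranging over all $e$ incident to $C_y$ and all radii $r$, one produces a vertex blamed by infinitely many vertices with positive probability, contradicting the MTP.

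I would organize the steps as: (1) reduce, by ergodicity, to the dichotomy "some tree has finitely many ends with positive probability" and fix the event $A$ of configurations where a fixed edge $e=\{x,y\}$ has $x$ in an infinitely-ended component and $y$ in a finitely-ended one with $C_x\neq C_y$; check $\P(A)>0$; (2) apply WIT to get $f=f(\omega,e,x,r)$ with $\P(\pii A)>0$ and $f\in C_x\cap B(x,r)^c$; (3) verify the topological fact that inserting $e$ and deleting $f$ (with $f$ in $C_x$, far from $e$) yields a tree containing $y$ with infinitely many ends — here one uses that deleting an edge from an infinitely-ended tree leaves the infinitely-ended structure on at least one side, and that side, reconnected through $e$ to $C_y$, is still one tree; (4) set up the mass transport from $v$ to the set of possible "origin" vertices $y$ of such a surgery producing the tree of $v$, and derive a contradiction with the MTP. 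I expect step (3)–(4) to be the main obstacle: making the "origin" of the surgery \emph{invariantly and boundedly} recoverable from $\pii\omega$ alone is delicate, since deleting $f$ may itself change which vertex looks like the culprit; the fix is to let mass be sent along \emph{all} candidate edges $e$ and to exploit that $y$ gets blamed by a set of vertices of infinite expected size (coming from the infinitely-ended tree it now inhabits), while any vertex is, by WIT applied in reverse, the culprit for only finitely many vertices in expectation.
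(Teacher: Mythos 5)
Your setup through the application of WIT matches the paper's: fix $e=\{x,y\}$ on an event $A$ with $x$ in an infinitely-ended tree and $y$ in a finitely-ended one, choose $r$ so that $C_x\setminus B(x,r)$ has several infinite components, and apply WIT to get $f=f(\omega,e,x,r)\in C_x\cap B(x,r)^c$ with $\P(\pii A)>0$. You also correctly flag that the clause $f\in C_x$ from Definition~\ref{maindef} is what makes this work, matching Remark~\ref{WIT_pocsoles}. But from there your argument diverges, and the divergence introduces real gaps.

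First, your step (3) as written is not quite right. You say deleting $f$ ``splits off only a finite or `small' piece'' --- that is false in general: $f$ can sever an infinite subtree carrying infinitely many ends. What is actually true (and what the paper exploits) is that since $f\not\in B(x,r)$ and $C_x\setminus B(x,r)$ has at least three infinite components on $A$, the piece of $C_x$ remaining with $x$ after deleting $f$ still contains at least two of those infinite components, hence at least two ends, and then (by part (iv) of Lemma~\ref{transient}) infinitely many. You reach the right conclusion but need the ``three components'' conditioning and Lemma~\ref{transient}(iv) to justify it, not the stated heuristic.

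Second, and more seriously, your step (4) replaces the paper's one-line conclusion with a new mass transport that does not work as stated. You propose to ``run the reverse of the WIT operation'' on $\pii\omega$ to identify the culprit $y$. But the deleted edge $f$ simply is not present in $\pii\omega$, and Definition~\ref{maindef} gives no canonical inverse: there is no measurable, invariant way to reconstruct which $(e,f)$ surgery (if any) produced a given configuration. Consequently the ``each vertex sends mass to its culprit $y$'' transport is not well-defined on the event of positive probability you have constructed, and the claim that the expected mass a vertex receives/sends is controlled is unsupported. You flag this delicacy yourself, but the proposed fix --- ranging over all candidate $e$ and $r$ --- does not supply a definition of the transport or a bound on outgoing mass. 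The paper avoids all of this by noticing that the merged tree in $\pii\omega$ inherits the finitely many ends of $C_y$, and these become \emph{isolated ends} of a tree that also has (at least two, in fact infinitely many) other ends; such a tree cannot occur with positive probability in an invariant random forest, by the MTP argument recorded as Proposition 3.9 in \cite{LP}. Since $\P(\pii A)>0$, this is already the contradiction --- no new mass transport needs to be invented, and no reconstruction of $e$, $f$, or $y$ from the configuration is required. If you want to keep your framing, the cleanest repair is to drop the ``culprit'' idea entirely and instead identify, in $\pii\omega$, the base vertex of an isolated end (which \emph{is} canonically recoverable) and run the standard isolated-end mass transport there.
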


\begin{proof}
Suppose by contradiction that there is also some component with finitely many ends. Then there is a vertex $x$, edge $e=\{x,y\}$ and event $A_0$ with $\P(A_0)>0$ such that conditioned on $A_0$, $x$ is in a component $C$ with infinitely many ends, and $y$ is in a different component $C'$ with finitely many ends. To see this, note that with probability 1 there exist adjacent components such that one of them has infinitely many ends and the other one has finitely many ends. Then for some fixed edge $e$, there is a positive probability that $e$ connects two such components - otherwise, summing up over the countably many edges, we would get an event of 0 probability, contradicting the previous sentence. A similar argument will be used later several times without explicit mention. Namely, if there exists an edge of a certain property with positive probability, then there exists a fixed edge $e$ that has this property with positive probability.

Choose $r>0$ such that $C\setminus B(x,r)$ has at least 3 infinite components with probability at least $\P (A_0)/2$. Such an $r$ exists by the assumption on $C$. Let $A$ be the subevent of $A_0$ that $C\setminus B(x,r)$ has at least 3 infinite components. In particular, $\P(A)\geq \P(A_0)/2>0$. Now consider $f=f(\omega,e,x,r)$ as in the definition of weak insertion tolerance and take $\pii A$. Then for every $\omega\in A$, the component of $x$ in $\omega\cup\{e\}\setminus\{f\}\in \pii A$ contains $C'$, $e$ and at least 2 ends from $C$. 
In particular, it has an isolated end (in $C'$), which is impossible by a standard MTP argument (see Proposition 3.9 in \cite{LP}). By WIT, $\P(\pii A)>0$, giving a contradiction. 
\end{proof}
\qed

The next lemma summarizes some well-known claims that we will need later. 

\begin{lemma}\label{transient}
Consider an invariant edge-percolation process on $G$ whose components are infinite trees. Let $o\in V(G)$.

(i) If $C_o$ has infinitely many ends, then it is transient.

(ii) If the expected degree of $o$ is strictly greater than 2 then some component has infinitely many ends. Conversely, conditioned on that $o$ is in a component with infinitely many ends, its expected degree in the component is greater than 2.

(iii) If $C_o$ has infinitely many ends, then it has exponential growth.

(iv) If $C_o$ has infinitely many ends, then for any finite $S\subset V\cup E$ every infinite component of $C_o\setminus S$ has infinitely many ends.
\end{lemma}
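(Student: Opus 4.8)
The plan is to treat each part by now-standard mass-transport and unimodularity arguments, keeping in mind that the components are trees, so ``infinitely many ends'' means the tree is not a single bi-infinite path and not a ray-with-finite-decorations; equivalently, the number of branch points (degree $\geq 3$ vertices) is infinite. For part (i), I would invoke the classical fact that a tree with at least three ends contains a subtree isomorphic to a biregular or binary-like tree with branching at infinitely many scales; more concretely, a locally finite tree with infinitely many ends contains infinitely many disjoint ``thick'' paths, and one constructs a flow of finite energy by the Nash-Williams / subtree comparison — or simply by comparison with a transient tree sitting inside it. Since the component is a subgraph of the bounded-degree graph $G$, the degrees are bounded, and a bounded-degree tree with infinitely many ends is transient for simple random walk (this is classical; it also follows from (iii) together with the fact that exponential growth of a bounded-degree graph gives positive speed / transience is not automatic, so I will argue directly via a flow). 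The cleanest route: by part (iv) (proved below independently of (i)) and the structure of infinitely-ended trees, one finds an embedded tree all of whose vertices have degree $\geq 3$ after suppressing degree-$2$ vertices, and a bounded-degree $3$-regular-at-branch-points tree carries a unit flow of finite energy, hence is transient; transience passes to the supergraph $C_o$.

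For part (ii), this is a direct mass transport / Euler-characteristic computation. Consider the invariant random forest whose components are infinite trees. Using the mass transport principle, send mass from each vertex to itself according to $1 - \tfrac12\deg_{C_o}(o)$: in a finite tree the sum of $1 - \tfrac12\deg$ over vertices equals $1$ (Euler), and for a one-ended infinite tree an exhaustion argument shows the ``expected defect'' is $0$, while a tree with two ends also gives $0$ and with $k\geq 3$ or infinitely many ends gives a negative contribution that can be localized. Concretely: if $\E[\deg_{C_o}(o)] > 2$ then the component cannot be a.s. one-ended or two-ended (for those, a standard MTP argument forces $\E[\deg]=2$), so some component has $\geq 3$ ends; being a tree with at least three ends and invariant, it then has infinitely many ends by ergodicity-type arguments or by noting a tree component under a unimodular law with exactly $k\in\{3,4,\dots\}$ ends is impossible (the set of ends would be a finite invariant structure, contradicting MTP applied to ``mass to the median/branch vertices''). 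Conversely, conditioned on $C_o$ having infinitely many ends, the same MTP identity forces $\E[\deg_{C_o}(o)\mid \text{infinitely many ends}] > 2$, since otherwise the defect vanishes and the tree would be one- or two-ended. I would cite Proposition 3.9-type results in \cite{LP} for the ``no isolated end'' fact and for the MTP bookkeeping.

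For part (iii), I would use that a locally finite tree with infinitely many ends, after suppressing degree-two vertices, contains a subtree in which every vertex has degree $\geq 3$; since $G$ has bounded degree $d$, the branch vertices in the original tree are separated by paths of bounded-in-expectation length, but more simply: an infinitely-ended bounded-degree tree contains, for some $m$, a subtree quasi-isometric to the binary tree, hence has growth at least $2^{n/m}$, i.e. exponential. Part (iv) is the routine observation that removing a finite set $S$ from a tree with infinitely many ends leaves finitely many components, their ends partition the end space of $C_o$ minus finitely many, so at least one infinite component inherits infinitely many ends; and any infinite component that inherits only finitely many ends would account for only finitely many ends, so summing over the finitely many components forces the infinitely-ended-ones to still exist — and in a tree an infinite subtree with finitely many ends plus the finitely many removed vertices cannot swallow infinitely many ends, giving the claim.

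\medbreak\noindent The main obstacle I expect is part (ii): making the Euler-characteristic mass transport fully rigorous for infinite trees under a unimodular (not necessarily transitive-component) law, in particular ruling out components with exactly $k$ ends for finite $k\geq 3$ and correctly localizing the ``defect'' so that positive expected excess degree is genuinely equivalent to infinitely many ends on a positive-probability event. The standard fix is to transport the Euler defect along the tree toward a canonically chosen finite ``core'' (the convex hull of three chosen ends, or the branch structure), but one must be careful that such a core is well-defined and finite exactly when the number of ends is finite; I would handle this by an exhaustion of each tree component by finite subtrees and a limiting MTP argument, exactly as in the treatment of invariant spanning trees in \cite{LP}.
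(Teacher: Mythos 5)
There is a genuine gap: you treat parts (i), (iii) and (the ``every'' in) (iv) as deterministic facts about bounded-degree trees with infinitely many ends, and they are false at that level of generality. Take a binary tree and replace each edge at depth $n$ by a path of length $3^n$: this is a tree of maximal degree $3$ with uncountably many ends, yet it is recurrent (the resistance to infinity is $\sum_n 3^n 2^{-n}=\infty$) and has only polynomial growth. This kills both of your proposed arguments: for (i), after suppressing degree-$2$ vertices you indeed get a transient tree, but $C_o$ is a \emph{subdivision} of that core, not a supergraph of it, and Rayleigh monotonicity goes the wrong way when the subdivision paths have unbounded length; for (iii), an infinitely-ended bounded-degree tree need not contain anything quasi-isometric to a binary tree with uniformly bounded stretch. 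The whole content of these statements is that such sparse branching \emph{cannot occur in an invariant percolation}; the paper gets this by citation: (i) is the proof of Proposition 3.11 in \cite{LS} (an MTP-based argument), and (iii) follows because invariance plus infinitely many ends gives $p_c<1$ (Theorem 7.1 in \cite{BLPSperc}), whence ${\underline{\rm gr}}\geq {\rm br}=p_c^{-1}>1$. Your proposal never brings the invariance of the process to bear on (i) and (iii), so it cannot close.

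Part (iv) has a similar problem: your counting argument only shows that \emph{some} infinite component of $C_o\setminus S$ keeps infinitely many ends, whereas the claim is that \emph{every} infinite component does, and deterministically this is false (attach an infinite ray to a binary tree; removing one edge leaves a one-ended infinite component). The paper's one-line argument is that a finitely-ended infinite component of $C_o\setminus S$ would produce an isolated end of $C_o$, which is impossible for invariant percolations by the no-isolated-end result (Proposition 3.9 in \cite{LS}); you cite the no-isolated-end fact only in your discussion of (ii), where it is not the issue. Your sketch of (ii), by contrast, is essentially the expected-degree/MTP computation behind Theorem 7.1 of \cite{BLPSperc}, which is exactly what the paper invokes, so that part is in the right spirit even if the bookkeeping you flag as delicate is left open.
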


\begin{proof}
The proof of Proposition 3.11 in \cite{LS} shows (i), and the proof of Theorem 7.1 in \cite{BLPSperc} shows (ii). For (iii), one has to use the fact that the existence of infinitely many ends implies $p_c<1$ (see Theorem 7.1 in \cite{BLPSperc}). Hence ${\underline {\rm gr}}\geq {\rm br}=p_c^{-1}>1$, where br is the branching number, ${\underline {\rm gr}}$ is the lower exponential growth rate, and $p_c$ is the critical percolation probability (see Sections 1.5 and 3.3 in \cite{LP} for the equality and inequality above). Part (iv) is true because otherwise there would be an isolated end in $C_o$. This is impossible by Proposition 3.9 in \cite{LS}. 
\end{proof}
\qed

\def\T{{\cal T}}

\begin{theorem} \label{comps}
Let $G$ be a unimodular quasitransitive graph. Suppose that ${\cal F}$ is an ergodic random spanning forest of $G$ that satisfies weak insertion tolerance and one of its components has infinitely many ends. Then it either has one component, or it has infinitely many components.
\end{theorem}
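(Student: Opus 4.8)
The plan is to argue by contradiction, assuming $\mathcal{F}$ has some finite number $N$ of components with $2 \le N < \infty$. By ergodicity of $\mathcal{F}$, the number of components is an almost sure constant, so we may fix such an $N$. Since one component has infinitely many ends, by Theorem \ref{ends} \emph{every} component has infinitely many ends, and hence by Lemma \ref{transient}(ii) every component has expected degree strictly greater than $2$ at a typical vertex. The idea is then to use weak insertion tolerance to construct, on a positive-probability event, a configuration where two of the $N$ components have been merged into one while the others are untouched, producing a configuration with exactly $N-1$ components on a set of positive probability --- contradicting that the number of components is the constant $N$.

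To set this up, first observe that with probability $1$ there exist two distinct components that are adjacent in $G$ (otherwise $G$ itself would be disconnected, since the components partition $V$ and there would be no $G$-edges between different parts). By the standard countable-union argument used in the proof of Theorem \ref{ends}, there is a fixed edge $e = \{x,y\}$ and an event $A_0$ with $\P(A_0) > 0$ on which $x$ and $y$ lie in distinct components $C_x \ne C_y$. Now I would choose $r$ large enough that, on an event $A \subseteq A_0$ of positive probability, the ball $B(x,r)$ contains at least one vertex from every one of the $N$ components --- this is possible because each component is infinite (indeed has exponential growth by Lemma \ref{transient}(iii)), so the probability that $B(x,r)$ meets all $N$ components tends to $1$ as $r \to \infty$, and we may intersect with $A_0$. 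Apply WIT with this $e$, $x$, $r$ and the event $A$: we obtain $f = f(\omega, e, x, r) \in E(G) \cup \{\emptyset\}$ with $\P(\pi_e^f A) > 0$, and on $A$ the edge $f$, if nonempty, lies in $C_x \cap B(x,r)^c$.

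The key point is now to track the component count after the surgery $\omega \mapsto \omega \cup \{e\} \setminus \{f\}$. Adding $e$ merges $C_x$ and $C_y$ into one component; this drops the count from $N$ to $N-1$. Deleting $f$ from inside $C_x$ either leaves $C_x \cup C_y \cup \{e\}$ connected (if $f$ is not a bridge in that merged tree) --- but $\mathcal{F}$ is a forest, so every edge is a bridge --- hence deletion of $f$ splits the merged component into exactly two pieces, bringing the count back to $N$. To kill this, I need $f$ to be \emph{in the same piece as $e$}: since $f \in C_x \setminus B(x,r)$ while $e$ joins $x$ to $y$, deleting $f$ separates $C_x$ into the part containing $x$ (which still contains $B(x,r) \cap C_x$, hence all of $C_x$'s representatives near $x$ and the edge $e$ and all of $C_y$) and a second part $C_x'$ lying entirely outside $B(x,r)$. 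Crucially, because $B(x,r)$ meets all $N$ components on $A$, the piece $C_x'$ that breaks off contains \emph{no} vertex forced by another component; but this alone does not yet give a contradiction --- $C_x'$ is simply a new component, so the count is still $N$. The main obstacle, and the step requiring the most care, is therefore to rule out that the broken-off piece $C_x'$ is infinite: if $C_x'$ were finite, then on $\pi_e^f A$ we would have a configuration that is a spanning \emph{subgraph} but not spanning forest with all components infinite --- wait, $C_x'$ finite would mean the configuration $\omega \cup \{e\} \setminus \{f\}$ has a finite component, and one shows via Lemma \ref{transient}(iv) (every infinite component of $C_x \setminus \{f\}$ still has infinitely many ends) combined with the end-counting MTP argument from the proof of Theorem \ref{ends} that the relevant configuration cannot have the end-structure it must have; the cleanest route is to observe that after deleting $f$ the piece containing $e$ has $C_y$'s ends \emph{plus} at least two ends coming from $C_x$ near $x$, forcing in particular an isolated end somewhere or an impossible degree profile, exactly as in Theorem \ref{ends}. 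So the real work is choosing $r$ and the event $A$ so that on $\pi_e^f A$ the number of \emph{infinite} components is $N-1$ while any finite leftover is excluded by a mass-transport/end argument; I expect this bookkeeping — ensuring the deleted edge $f$, guaranteed only to be outside $B(x,r)$ and inside $C_x$, cannot reconstitute an $N$-th infinite component — to be the crux of the proof.
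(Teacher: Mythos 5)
Your plan is the ``insert $e$, count components'' argument, and the obstacle you run into at the end is not a bookkeeping issue that more care over $r$ and $A$ will fix --- it is fatal to this route, and the paper explicitly flags it: the surgery $\pi_e^f$ never changes the number of components at all. Adding $e$ merges $C_x$ and $C_y$ (count drops to $N-1$), but $f$ is an edge of the tree $C_x$, so deleting it necessarily splits the merged tree into two pieces (count returns to $N$). Your hoped-for rescue --- ruling out that the broken-off piece $C_x'$ can be an infinite component via an isolated-end / mass-transport argument ``as in Theorem \ref{ends}'' --- does not work: by Theorem \ref{ends} all components of ${\cal F}$ have infinitely many ends, and by Lemma \ref{transient}(iv) every infinite component of $C_x\setminus\{f\}$ again has infinitely many ends, so the configuration $\pi_e^f\omega$ has no isolated end and no forbidden end or degree structure whatsoever; it is a perfectly legitimate forest with $N$ components (WIT even tells you $\P(\pi_e^f A)>0$ under $\P$ itself, so such configurations genuinely occur). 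Nor does the requirement that $B(x,r)$ meet all $N$ components help, since it has no bearing on whether the piece cut off by $f$ is infinite. So as written there is no contradiction with ergodicity, and the gap is the entire engine of the proof.

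The paper's actual argument uses the finiteness of the number of components in a different way. Since each component has infinite outer boundary and only finitely many neighbouring components, some pair of components $C,C'$ touches at infinitely many places; one then shows (via Lemma \ref{transient}(ii)--(iii), looking at the subtree of $C$ spanned by the suitable touching points $P(C,C')$, which has expected degree $>2$) that $P(C,C')$ grows exponentially inside $C$, and moreover has exponential growth inside each of the $\geq 3$ infinite branches of $C\setminus B(x,r)$ on the event $A$. The contradiction comes from a mass transport: every vertex $v$ sends mass $i^{-2}$ to the $i$-th vertex of the ${\cal F}$-path to each $G$-neighbour $w$ lying in the same component. The expected mass sent is finite (bounded degree times $\sum i^{-2}$), but on $\pi_e^f A$ --- where at most one of the $\geq 3$ infinite branches of $C\setminus B(x,r)$ is detached by the removal of $f$ --- every touching point $v\in P(C,C')$ in a surviving branch now has its tree-path to its neighbour in $C'$ passing through $e$, hence through $x$, so $x$ receives mass $i^{-2}$ from exponentially many vertices at tree-distance $i$, giving infinite expected received mass. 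This violation of the Mass Transport Principle, not a count of components, is what yields the contradiction; that mechanism (touching points, their exponential growth, and the $i^{-2}$ transport along tree paths) is missing from your proposal and cannot be replaced by the component-counting scheme you outline.
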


We mention that the proof of the same fact for Bernoulli percolation cannot be generalized to our setting directly. In case of Bernoulli percolation, one 
assumes, proving by contradiction, that there are $k$ components, $1<k<\infty$. Then one
inserts an edge, to derive that the probabilities of having $k$ components or having $k-1$ components are both positive. This contradicts ergodicity. In our case, when we apply weak insertion tolerance, even though we reduce the number of components when inserting an edge $e$, we increase it when deleting edge $f$. Hence there is no direct contradiction to the fact that the number of components is a constant a.s.

\begin{proof}
We will prove by contradiction. Suppose that there are more than one, but finitely many components. 

The proof will loosely follow the method in \cite{T}, with insertion tolerance replaced by weak insertion tolerance. Some arguments become simpler because the components are trees and also because of the assumption that there are only finitely many components. We say that two components $C$ and $C'$ {\it touch each other} at $x$ if there is an edge $\{x,y\}\in E$, $\{x,y\}\not\in {\cal F}$, with $x\in C$ and $y\in C'$.

There exist distinct components $C$ and $C'$  such that $C$ has infinitely many ends, and further, $C$ and $C'$ touch each other at infinitely many places (because the outer boundary of a cluster $C$ is infinite, and there are finitely many neighboring components). Choose $C$ and $C'$ with these properties uniformly at random, from the finitely many possible pairs. Hence there exist adjacent vertices $x$ and $y$ and an event $A_0$ such that $\P(A_0)>0$, and such that conditioned on $A_0$, $C_x=C$, $C_y=C'$. (In particular, $C_x$ has infinitely many ends, and it touches $C_y$ at infinitely many places on $A_0$.) Fix such vertices $x$ and $y$, let $e=\{x,y\}$($\in E$), and condition on $A_0$. Let the set of such touching points be $\T=\{v\in C_x \,:\,$there is a $u\in C_y$ such that $\{v,u\}\in E(G) \}$. Furthermore, for any $v\in C_x$ and infinite component $C^-$ of $C_x\setminus v$, $C^-$ has infinitely many ends (by (iv) of Lemma \ref{transient}) and $C^-\cap\T \not=\emptyset$. (This latter can be shown by a standard mass transport argument. To sketch it: one could assign to each point of $C_x$ the element of $\T$ that is closest to it in $C_x$. If the claim were not true, there would be a point that is assigned to infinitely many points of $C^-$ with positive probability, which is impossible.)

Fix $r>0$ such that given $A_0$, $C_x\setminus B(x,r)$ has at least 3 infinite components with probability at least $1/2$. (Such an $r$ exists because $C_x$ has infinitely many ends on $A_0$.) 
Let $A$ be the subevent of $A_0$ when this holds. We have $\P (A)\geq\P (A_0)/2>0$. Note that on $A$, $x\in \T$ (because this holds on $A_0$ already).

Let us sketch the rest of the proof before going into the details. We will define the following mass transport. For each $v,w$ in the same ${\cal F}$-component such that $v$ and $w$ are adjacent in $G$, take the minimal path $P_{v,w}$ within the ${\cal F}$-component between them. For each such pair $v,w$, let $v$ send mass $i^{-2}$ to the vertex of $P_{v,w}$ that has distance $i$ from $v$ in $P_{v,w}$. Then the expected mass sent out is at most $d\pi^2 /6$. However, the expected mass received is infinite, because of the way we constructed $C_x$ on $\pii A$, with an exponentially growing set of touching pairs.

Now we give the detailed proof. Let $P(C,C')\subset T$ be the set of all $v$ in $T$ with $C\setminus B(v,r)$ having at least 3 infinite components.
For the $x$ and $r$ that we fixed above, $x\in P(C,C')$ conditioned on $A$. Thus $P(C,C')\not =\emptyset$ with positive probability, and hence by ergodicity and the mass transport principle, $P(C,C')$ is infinite a.s.
Let $C_1,\ldots , C_m$ be the infinite components of $C\setminus B(x,r)$ ($m\geq 3$). 
We will show that $P(C,C')\cap C_i$  has exponential growth within $C_i$ for every $i$. Define $T(C, C')$ as the minimal subtree of $C$ that contains every vertex of $P(C,C')$. In other words, $T(C,C')$ is the union of all simple paths with both endpoints in $P(C,C')$. 
The graph $C\setminus T(C,C')$ only has finite components, as can be easily seen by a mass transport argument. (Otherwise let each vertex send mass 1 to a uniformly chosen element of $P(C,C')$ that is closest to it...)
Hence $T(C,C')\cap C_i$ is a tree with infinitely many ends (using the fact that $C_i$ has infinitely many ends), thus the growth of $T(C,C')$ is in fact exponential (Lemma \ref{transient} (iii)). Define the subtree $T_\ell (C,C')$ of $T(C,C')$ as the union of all minimal paths between two points of $P(C,C')$ such that the path has length at most $\ell$. The tree  $T_\ell (C,C')$ converges to  $T (C,C')$, and so does the expected degree within it. By (ii) in Lemma \ref{transient}, the expected degree in $T(C,C')$ is greater than 2. Hence it is greater than 2 in $T_\ell (C,C')$ as well for large enough $\ell$.
It follows that some component of $T_\ell (C,C')$ has exponential growth for $\ell$ large enough, using again Lemma \ref{transient}.
Consequently, conditioned on $A$, for large enough $\ell$ and some $c>1$, the inequality $|B_{T_\ell (C,C')}(x,r)\cap C_i \cap P(C,C')| \geq c^r$ is satisfied for each $r$ large enough. Thus $|B_{T (C,C')}(x,r)\cap C_i \cap P(C,C')| \geq c^r$ also holds. 
 
Consider the infinite components $C^1,\ldots , C^m$ of $C\setminus B(x,r)$. Conditioned on $A$, we have $m\geq 3$. On the other hand, we have seen that for each $C^i$, the set $P(C,C')\cap C^i$ has exponential growth in $C$. All but at most one of $C^1,\ldots, C^m$ are in the same component of $\pii \omega$ as $x$ ($\omega\in A$). We may assume that $C^1$ is the exceptional one (if any). 

Define the following mass transport. For every $v$ adjacent to some $w$ in $G$, choose the minimal path in $C $ between $v$ and $w$ if $v,w\in C$, and let $v$ send mass $i^{-2}$ to the $i$'th vertex on this path. The expected mass sent out is finite, because $v$ has a bounded number of neighbors. To compute the expected mass received, note that on $\pii A$, $x$ will receive mass $i^{-2}$ from every vertex of $S_{C_x}(x,i)\cap C_x \cap (P(C,C')\setminus C^1)$. Because of the exponential growth of $P(C,C')$ in $C^i$, the expected mass received is hence infinite. This contradiction finishes the proof. 

\end{proof}
\qed

\begin{remark}
{\rm One is tempted to think that the above arguments may work to show (similarly to \cite{T}) that there are no infinitely touching clusters when the percolation is weakly insertion tolerant and each component has infinitely many ends. However, this claim is not true: consider FUSF on the free product of $\Z^5$ and $\Z$, and use the result of \cite{BKPS} that any two of the infinitely many FUSF-clusters in $\Z^5$ touch each other at infinitely many places, and the fact that FUSF$(\Z^5*\Z)|_{\Z^5}$ has the same distribution as FUSF$(\Z^5)$. Weak insertion tolerance is not enough in this setting to make the argument of \cite{T} work, because deleting $f$ may make a part of the cluster ``fall off" that contains all the touching points $\T$.}
\end{remark}

\def\T{{\cal T}}

\section{Indistinguishability of clusters} \label{s.indistinguishability}

In this section we will prove the indistinguishability of clusters. By this we mean that for any  invariant measurable ${\cal A}\subset 2^{E(G)}$, a.s. either every infinite component belongs to $\A$ or none of them. When an invariant measurable ${\cal A}\subset 2^{E(G)}$ is given, we will refer to $\A$ and to $\A ^c$ 
as a {\it type}. If $C_o\in \A$, we say that $\A$ is the type of $C_o$ or (with a slight abuse of terminology) that $\A$ is the type of $o$; similarly for $\A^c$.

The following lemmas will be needed for the proof. The first one was shown in \cite{LS}. Informally speaking, it says that an invariant percolation process looks the same from a fixed vertex as from the vertex where a simple random walker is after one step within the cluster starting from the fixed vertex.

\begin{lemma}\label{stationary}
Consider an invariant edge-percolation process on $G$. Let $\hat\P_o$ be the joint distribution of $\omega$ and the two-sided delayed simple random walk on $C_o$ started from vertex $o$. Then the restriction of $\hat\P_o$ to the Aut$(G)$-invariant $\sigma$-field is stationary. More precisely, $\hat\P_o ({\cal A})=\hat\P_o ({\cal SA})$, where ${\cal S}$ is the shift operator by the random walk step, and ${\cal A}$ is any Aut$(G)$-invariant subset of $V^\Z\times 2^E$.  
\end{lemma}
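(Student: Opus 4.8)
The plan is to establish Lemma \ref{stationary} by a direct mass-transport/unimodularity argument, exactly in the spirit of the proof of the corresponding statement in \cite{LS}. First I would set up the augmented random environment: from the fixed vertex $o$ we run the \emph{delayed} simple random walk on the cluster $C_o$ (at each step, stay put with probability $1/2$, otherwise move to a uniformly chosen neighbor within $C_o$), two-sided, so that a realization of $\hat\P_o$ is a pair $(\omega,(X_n)_{n\in\Z})$ with $X_0=o$ and $(X_n)$ a bi-infinite trajectory in $C_o$. The delayed walk is used so that the transition operator is self-adjoint with respect to the degree measure even though $G$ is not regular; this is the technical reason one cannot use the plain SRW here.

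Next I would phrase stationarity as an instance of reversibility plus the Mass Transport Principle. The key observation is that, because the walk is reversible with respect to the within-cluster degree, the pair $(o,x)$ where $x$ is the first step of the walk carries a mass-transport structure: for an $\mathrm{Aut}(G)$-invariant event $\ev A \subseteq V^\Z \times 2^E$, define a transport where each vertex $v$ sends to each neighbor $w$ in $C_v$ the mass $\tfrac{1}{2\deg_{C_v}(v)}\,\mathbf 1[(\omega,(X_n))\in \ev A \mid X_0 = v, X_1 = w]$ — more precisely one integrates over all two-sided trajectories through the directed edge $(v,w)$, weighted by the walk's law. Summing the outgoing mass from $o$ reproduces $\hat\P_o(\ev A)$; summing the incoming mass to $o$, using that the reversed trajectory of a walk stepping $w\to v$ is again a walk and that $\ev A$ is shift-invariant under $\ev S$ (so membership is unchanged by re-rooting the trajectory at its first step), reproduces $\hat\P_o(\ev S\ev A)$. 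The diagonal invariance of the transport function under $\mathrm{Aut}(G)$ is immediate from the invariance of the percolation and of $\ev A$, so the MTP (Section 8.1 of \cite{LP}) gives $\hat\P_o(\ev A)=\hat\P_o(\ev S\ev A)$.

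I expect the only real subtlety to be bookkeeping rather than a genuine obstacle: one must be careful that the "mass" quantities are genuinely finite and that the reversal identity correctly matches a two-sided trajectory with its shift, i.e.\ that conditioning on $X_0=v,X_1=w$ and then reversing is the same as conditioning on $X_{-1}=w,X_0=v$ and shifting by one, which is precisely where the delayed (hence reversible) nature of the walk is used. Since all of this is carried out in detail in \cite{LS}, I would simply invoke that argument; alternatively, I would cite it directly. For the sake of self-containedness the cleanest write-up is: let $\nu$ be the law of $(\omega,(X_n))$ under $\hat\P_o$, apply the MTP to the transport function $\phi(\omega,v,w)=\Pr[\,(\omega,(X_n))\in\ev A,\ X_0=v,\ X_1=w\,]$ (the probability over the walk only), note $\sum_w\phi(\omega,o,w)=\hat\P_o(\ev A\mid\omega)$ and $\sum_w\phi(\omega,w,o)=\hat\P_o(\ev S\ev A\mid\omega)$ by reversibility, take expectations over $\omega$, and conclude.
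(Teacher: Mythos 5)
Your overall plan (reversibility plus the Mass Transport Principle, following \cite{LS}) is the right one, and since the paper itself does not reprove this lemma but simply cites \cite{LS}, falling back on that citation is legitimate. However, your sketch contains a genuine error at the crux of the matter: the definition of the delayed walk and the measure with respect to which it is reversible. The walk you describe (stay put with probability $1/2$, otherwise move to a uniformly chosen neighbour \emph{within} $C_o$) has kernel $p_\omega(v,w)=\frac{1}{2\deg_{C_v}(v)}$, which is reversible with respect to the within-cluster degree measure --- but ordinary simple random walk already has that property, and for such a kernel the environment seen from the walker is stationary only after degree-biasing the root. For the unbiased law $\hat\P_o$ the conclusion is false for your walk: taking the Aut$(G)$-invariant event $\{\deg_{C_{X_0}}(X_0)=1\}$ and running your MTP computation gives $\E\bigl[f(\mathrm{env\ at\ }X_1)\bigr]=\E\bigl[f(\mathrm{env\ at\ }o)\bigl(\tfrac12+\sum_{w\sim_\omega o}\tfrac{1}{2\deg_\omega(w)}\bigr)\bigr]$, which differs from $\E[f(\mathrm{env\ at\ }o)]$ whenever cluster degrees are non-constant (mass flows from leaves toward high-degree vertices). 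Concretely, your claimed identity $\sum_w\phi(\omega,w,o)=\hat\P_o({\cal S}{\cal A}\mid\omega)$ fails because $p_\omega(w,o)\ne p_\omega(o,w)$.

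The point of ``delayed'' in \cite{LS} is exactly the opposite of what you state: the walk is delayed so as to be reversible with respect to \emph{counting} measure, not degree measure. At each step one chooses uniformly one of the $d$ edges of $G$ incident to the current vertex ($d$ the maximal degree, with extra holding probability in the quasitransitive case) and crosses it only if it lies in $\omega$, staying put otherwise; thus $p_\omega(v,w)=p_\omega(w,v)=1/d$ for every open edge $\{v,w\}$, and the holding probability is $\omega$-dependent rather than a fixed $1/2$. This symmetry is precisely what makes your unweighted transport argument (and its two-sided, shift-invariant version) go through with no biasing of the root. With the walk corrected in this way, the rest of your outline is the argument of \cite{LS} that the paper invokes.
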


\begin{definition} ({\bf Pivotal pairs}) Let $r\geq 0$ be an integer, $e,f\in E$, and let $\A$ be a type. Say that $(e,f)$, is an $r$-{\it pivotal pair}, if $f=f(\omega,e,x,r)$ for an endpoint $x$ of $e$ (as in the definition of WIT), and if the type of one of the endpoints of $e$  is different in $\pii \omega$ than in $\omega$. Define $z(e,f)=x$ if the type of $x$ is different in $\pii \omega$ than in $\omega$, otherwise define $z(e,f)=y$ (where $y$ is the other endpoint of $e$). If $(e,f)$ is an $r$-pivotal pair for some $r$, then we call $(e,f)$ {\it pivotal}.
\end{definition}

The next lemma is the modification of Lemma 3.13 in \cite{LS}, with some significant differences. Fix $r$. 

\begin{definition}\label{radon}
For each edge $\{x,y\}=e\in E$, define a measure $\P_e$ on $A_e:=\{\omega\,:\, C_x\not = C_y\}$ as $\P_e(A):=\P (\pii A)$, (where $A\subset A_e$ is arbitrary measurable and $f=f(\omega,e,x,r)$). By weak insertion tolerance, the restriction of $\P$ to $A_e$ is absolutely continuous with respect to $\P_e$. 
Let $Z_{e,x,r}(\omega)$ be the Radon-Nikodym derivative of $\P$ with respect to $\P_e$ on $A_e$. 
\end{definition}

\begin{lemma}\label{pivotal_exists}
Let $\F$ be some invariant ergodic random forest of $G$. Suppose that there exists a type $\A$ such that a.s. some cluster belongs to $\A$ and some other belongs to $\A^c$ (i.e., suppose that indistinguishability fails).
Suppose that $\F$ is weakly insertion tolerant and it has a component with infinitely many ends a.s. Then there are numbers $\delta>0$, $r\geq 0$, $p_\A >0$ such that with probability at least $p_\A$:
\begin{itemize}
\item  there exists an edge $e$ with an endpoint $x$ such that the pair $(e,f)$ is pivotal with $f=f(\omega,e,x,r)$,
\item  $Z_{e,x,r}(\omega)<\delta^{-1}$,
\item delayed simple random walk $(W(1),W(2),\ldots )$ started from $W(0)=z(e,f)$ avoids the endpoints of $e$ and $f$.
\end{itemize} 
\end{lemma}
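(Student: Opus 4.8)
The plan is to show that each of the three bulleted events has probability bounded below by a constant, and then to combine them. The first bullet — the existence of a pivotal pair — is the heart of the matter and is where I expect the main work to lie; the other two bullets are then obtained by restricting to a high-probability subevent without destroying the pivotal structure.

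First I would argue that pivotal pairs exist with positive probability. Since indistinguishability fails, a.s. there are clusters of both types, and since (by Theorem~\ref{ends}) every cluster has infinitely many ends, each cluster $C$ of type $\A$ has, by an ends/boundary argument, a neighboring cluster $C'$ of type $\A^c$ touching it; as in the proofs of Theorems~\ref{ends} and~\ref{comps}, by summing over the countably many edges there is a \emph{fixed} edge $e=\{x,y\}$ and an event of positive probability on which $C_x\in\A$, $C_y\in\A^c$. Now take $f=f(\omega,e,x,r)$ from weak insertion tolerance. The point is that for a suitable choice of $r$, inserting $e$ and deleting $f$ must change the type of an endpoint of $e$: in $\pii\omega$ the vertices $x$ and $y$ lie in a common cluster, so if the type were unchanged for \emph{both} endpoints we would need $C_x\cup C_y\cup\{e\}\setminus\{f\}$ to simultaneously be of type $\A$ (from $x$'s side) and type $\A^c$ (from $y$'s side) — impossible since a cluster has a single type. (One must be slightly careful that $f$ might detach a piece; but $f\in C_x\cap B(x,r)^c$, so for $r$ large the bulk of $C_y$, hence $y$ itself and the connection to $x$, survives, and the contradiction stands.) Hence $(e,f)$ is $r$-pivotal on an event of probability $\ge p_0>0$ for some $p_0$, some $r$. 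This gives the first bullet.

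Next, the second bullet. By Definition~\ref{radon}, $Z_{e,x,r}$ is the Radon--Nikodym derivative of $\P$ with respect to $\P_e$ on $A_e$, and both measures are finite, so $Z_{e,x,r}<\infty$ $\P$-a.s.; therefore $\P\big(Z_{e,x,r}<\delta^{-1}\big)\to 1$ as $\delta\to 0$. Choosing $\delta$ small enough, the event $\{Z_{e,x,r}(\omega)<\delta^{-1}\}$ has probability $>1-p_0/4$, so it intersects the pivotal event in probability $\ge 3p_0/4$.

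Finally, the third bullet, and the combination. On the event so far, start the two-sided delayed simple random walk from $W(0)=z(e,f)$, an endpoint of $e$ whose type changes. Since $z(e,f)$ lies in an infinite cluster (in fact one with infinitely many ends, hence transient by Lemma~\ref{transient}(i)), the walk is transient, so it has positive probability $q>0$ of never returning to the finite set consisting of the endpoints of $e$ and of $f$; in fact, by transience $q$ can be taken bounded below uniformly (e.g. a fixed positive fraction of the time the walk immediately escapes and never comes back, using the conditional stationarity from Lemma~\ref{stationary} to control the environment). Intersecting with the previous event and taking $p_\A := q\cdot(3p_0/4)/2>0$ (or simply $p_\A$ equal to the probability of the intersection, which is positive) yields the claim. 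The main obstacle, as noted, is the first bullet: one must set up the type-change argument carefully, making sure that deleting $f$ — which can chop an infinite piece off $C_x$ — does not spoil the comparison of types, and this is exactly where the requirement $f\in C_x\cap B(x,r)^c$ together with a good choice of $r$ is used.
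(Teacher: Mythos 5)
Your overall skeleton matches the paper's: fix an edge $e=\{x,y\}$ joining clusters of different types with positive probability, observe that applying $\pii$ merges the endpoints into one cluster so the type of one endpoint must change (this is exactly the paper's pivotality argument, and your version is fine, modulo the small slip that $f\in C_x$, so it is a piece of $C_x$, not of $C_y$, that can be detached --- harmless, since $x$ and $y$ stay joined through $e$ itself), and cut off the Radon--Nikodym derivative at level $\delta^{-1}$ by choosing $\delta$ small after $r$ is fixed. Those two bullets are essentially as in the paper.

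The genuine gap is in the third bullet. You claim that transience of the cluster of $z(e,f)$ by itself gives a positive (indeed uniformly positive) probability that the walk avoids the endpoints of $e$ and $f$ forever. That inference is false in the problematic case $z(e,f)=x$: here $f=\{u,v\}$ lies in $C_x$, and nothing in your event prevents every escape route from $x$ to infinity in the tree $C_x$ from passing through $u$ or $v$ (e.g.\ $x$ could sit on a finite decoration whose only infinite branch goes through $f$); in that situation the delayed walk a.s.\ hits an endpoint of $f$, so the conditional avoidance probability is $0$, and multiplying by a ``uniform $q$'' is not justified --- Lemma \ref{stationary} gives stationarity along the walk, not any lower bound of this kind. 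This is precisely why the paper does not work on the bare event $\{C_x\in\A,\,C_y\in\A^c\}$ but first chooses $r$ large and restricts to the subevent (still of probability at least half) on which $C_x\setminus B(x,r)$ and $C_y\setminus B(x,r)$ each have at least $3$ infinite components; combined with the WIT guarantee $f\in C_x\cap B(x,r)^c$ and Lemma \ref{transient}(iv) (infinite components left after removing a finite set still have infinitely many ends, hence are transient), this ensures that after deleting the endpoints of $e$ and $f$ there remains an infinite transient branch available to the walk, so the avoidance event has positive probability jointly with pivotality and $Z_{e,x,r}<\delta^{-1}$. To repair your argument you need to build some such geometric condition into the event before invoking transience; uniformity of $q$ is not needed, but positivity on a positive-probability subevent must be argued, and it does not follow from transience of $C_{z(e,f)}$ alone.
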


\begin{proof}
There is a cluster with infinitely many ends, hence by Theorem \ref{ends}, all clusters have infinitely many ends.
By the assumption, with positive probability $C_x\in\A$, and $C_y\in\A^c$ for some $e=\{x,y\}\in E$. Fix such an $x$, $y$, $e$, and call the event just described as $A$. Let $A_{r,\delta}\subset A$ be the event that the following hold: $Z_{e,x,r}(\omega)<\delta^{-1}$, $C_x\setminus B(x,r)$ has at least 3 infinite components and $C_y\setminus B(x,r)$  has at least 3 infinite components. If $r$ is large enough and $\delta>0$ is small enough, then $\P (A_{r,\delta})\geq\P (A)/2$. Condition on $A_{r,\delta}$. Both $C_x$ and $C_y$ are transient by Lemma \ref{transient}. Consequently, any $f=\{u,v\}\in C_x$ with $\dist (x,f)\geq r$ is such that $C_x\setminus \{u,v\}$ has an infinite and transient component, and similarly for $C_y\setminus \{u,v\}$. Hence the last statement of the lemma holds with positive probability. What remains is to prove that $(e,f(\omega,e,x,r))$ is a pivotal pair.


Let $C_x'$ be the component of $x$ in $\pii \omega$, and $C_y$ the component of $y$ in $\omega$. Let $\B\in\{\A,\A^c\}$ be the type of $C_x'$. Since $C_y\cap C_x'$ and $C_x\cap C_x' (\not =\emptyset )$ are contained in clusters of different types in $\omega$,
either the type of the points in $C_x\cap C_x'$ changed (from $\A$ to $\A^c$ if $\B =\A^c$), or the type of the points in $C_y\cap C_x'$ changed (from $\A^c$ to $\A$ if $\B=\A$) when going from $\omega$ to $\omega '$. If the former happens with positive probability, the proof is finished with $z(e,f)=x$, otherwise with $z(e,f)=y$.
\end{proof}
\qed

\begin{theorem}\label{indist}
Let $\F $ be an invariant ergodic random forest that is weakly insertion tolerant, and such that some cluster has infinitely many ends. Then for every invariant measurable ${\cal A}\subset 2^{E(G)}$, either every infinite component belongs to ${\cal A}$ a.s., or none of them. In other words, infinite clusters are indistinguishable.  
\end{theorem}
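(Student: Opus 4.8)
The plan is to mimic the indistinguishability argument of Lyons--Schramm \cite{LS}, but with ordinary insertion tolerance replaced by the weak version, and to exploit the structural results already proved (Theorems \ref{ends} and \ref{comps}) to control the ``damage'' done by deleting the edge $f$. Suppose indistinguishability fails. By Lemma \ref{pivotal_exists} there are $\delta>0$, $r\geq 0$, $p_\A>0$ such that with probability at least $p_\A$ there is an edge $e=\{x,y\}$ with a pivotal pair $(e,f)$, $f=f(\omega,e,x,r)$, with $Z_{e,x,r}(\omega)<\delta^{-1}$, and such that the delayed simple random walk from $z(e,f)$ avoids the endpoints of $e$ and $f$. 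The first step is to set up a stationarity/mass-transport framework: using Lemma \ref{stationary}, the law of the environment seen from $W(n)$ (the delayed SRW in its own cluster) is stationary on the invariant $\sigma$-field, so the ``type'' of $W(n)$ is a.s. eventually constant along the walk.

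Second, I would make the standard Lyons--Schramm move: from every vertex $v$, run delayed SRW in $C_v$; because clusters have infinitely many ends they are transient (Lemma \ref{transient}(i)), so the walk converges to an end, and by stationarity of the invariant field the ``type as seen along the walk'' stabilises. Define, for a vertex $v$, the event that $v$ is a ``pivoting witness'': $v=z(e,f)$ for a pivotal pair as in Lemma \ref{pivotal_exists} with the Radon--Nikodym and walk-avoidance conditions, \emph{and} additionally the SRW from $v$ converges to an end of $C_v$ that is still an end of the modified cluster $C_x'$ in $\pii\omega$ (this uses that $f$ is at distance $\geq r$ from $x$, that $C_x\setminus f$ and $C_y\setminus f$ have infinite transient pieces, and transience to route the walk away from the finite ``fallen-off'' part). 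By the previous step this event still has probability bounded below by some $p>0$. Now transport mass: each vertex $v$ sends mass to the vertex $u$ that is the first vertex along the SRW from $v$ lying outside $B(v, 2r)$, weighted by the hitting distribution — more precisely, use the reversed-walk / Radon--Nikodym bookkeeping of \cite{LS} so that the expected mass sent out is finite (bounded in terms of $\delta^{-1}$ and the degree) while, on the pivoting-witness event, infinitely many vertices $v$ in $C_x'$ on the $\A^c$-side send mass ``across'' $e$ to the $\A$-side, forcing infinite expected mass received at the $\A$/$\A^c$ interface. The ergodicity and MTP then give a contradiction, exactly as in \cite{LS}, because a type boundary inside a single cluster of the modified configuration cannot be ``invariant''.

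Third, the novelty to handle carefully: in \cite{LS} inserting $e$ changes \emph{exactly} the global configuration in a controlled way and the pivotal vertex stays in its cluster; here deleting $f$ can split $C_x$, so I must argue that the piece of $C_x$ carrying the relevant ends (and the touching structure used in the mass transport) does \emph{not} fall off. This is where the hypotheses $f\in C_x\cap B(x,r)^c$ and ``$C_x$ has infinitely many ends'' (hence, by Theorem \ref{ends}, so does $C_y$, and by Lemma \ref{transient}(iv) every infinite piece after removing a finite set) do the work: choosing $r$ large forces $f$ to be ``deep'', so removing $f$ leaves an infinite transient component of $C_x$ attached to $x$ that still has infinitely many ends, and the SRW from $z(e,f)$ (conditioned to avoid the endpoints of $e$ and $f$, as granted by Lemma \ref{pivotal_exists}) stays in that good component and converges to a genuine end of $C_x'$. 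The main obstacle is precisely making this ``good component survives and carries an infinite supply of witnesses'' step quantitative enough to feed the mass-transport: one needs that along the SRW the environment-seen-from-the-walker picks up pivotal pairs with positive density (a Borel--Cantelli / stationarity argument), uniformly enough that the received-mass sum diverges. Once that is in place the rest is the routine Lyons--Schramm Radon--Nikodym mass transport, using $Z_{e,x,r}<\delta^{-1}$ to keep sent mass finite.
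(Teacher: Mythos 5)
You have assembled the right toolkit (Lemma \ref{pivotal_exists}, Lemma \ref{stationary}, transience of clusters with infinitely many ends, and the role of $f\in C_x\cap B(x,r)^c$ in keeping the good part of $C_x$ attached), but the contradiction mechanism you propose is not a working argument, and the step you yourself flag as ``the main obstacle'' is exactly where the proof has to happen. Your plan is to apply a mass transport \emph{in the modified configuration} $\pii\omega$: infinitely many witnesses on the $\A^c$-side send mass ``across $e$'', forcing infinite expected received mass at the type interface. This fails for three concrete reasons. First, the Mass Transport Principle is available only for the invariant law $\P$ (jointly with the walk, $\hat\P$); the law of $\pii\omega$ is not invariant and is controlled by weak insertion tolerance only through the Radon--Nikodym bound $Z_{e,x,r}<\delta^{-1}$ on the specific structured events where the walk meets the (random) pivotal pair for the first time -- one cannot ``run the MTP inside $\pii\omega$'' and then pull back, because the distortion is not uniform over the uncountably many overlapping witness events you would need. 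Second, the transport you describe (mass sent to the first exit vertex of $B(v,2r)$ along the walk, weighted by the hitting distribution) has expected outgoing mass at most $1$ and does not concentrate at any vertex, so no divergence of received mass is forthcoming; the ``infinitely many vertices send mass across $e$'' picture is the touching-point argument of Theorem \ref{comps}, not an indistinguishability argument. Third, the assertion that ``the type seen along the walk stabilises'' is vacuous: the walk stays in $C_o$, whose type is fixed by definition.

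What is missing is the finite-approximation step that drives the actual proof. In the paper one fixes $o$, approximates $\A_o$ by a cylinder event $\A_o'$ depending only on $B(o,R)$ with $\P(\A_o\Delta\A_o')<\epsilon$, and considers, for each pair $(e,f)$ outside $B(o,R)$, the event $\Ene\cap\Pef$ that the walk first meets the pair at time $n$ and the pair is pivotal with the Radon--Nikodym bound. On such events, applying $\piin$ flips the type of $W(n)$, hence of $o$ (the components of $o$ and $W(n)$ still coincide after the surgery, which is where $f\in C_x\cap B(x,r)^c$ and the walk-avoidance condition are used), while $\A_o'$ is untouched and $\hat\P$ is distorted by at most a factor $\delta$; summing over $(e,f)$ gives $\P(\A_o'\setminus\A_o)\geq \delta\sum\hat\P(\A_o\cap\Ene\cap\Pef)-\delta\epsilon$. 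Transience lets one replace the restricted sum by the full sum for $n$ large, and Lemma \ref{stationary} identifies the time-$n$ sum with the time-$0$ sum, which Lemma \ref{pivotal_exists} bounds below by a constant independent of $\epsilon$; letting $\epsilon\to 0$ contradicts the approximation. This bookkeeping -- cylinder approximation plus first-hitting decomposition plus stationarity -- is precisely the quantitative ``positive density of pivotal pairs along the walk'' that you left open, and it is not supplied by any mass transport; without it your proposal does not reach a contradiction.
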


Our proof will follow that of Theorem 3.3 in \cite{LS}, with some significant modifications. Let us mention the most important difference here. The main idea of that proof is that the existence of pivotal edges (in that setup meaning edges that connect clusters of different types) implies that by the insertion of one of them, the type of an infinite cluster changes. Hence the type of a cluster depends on the status of each of these single edges. Such edges exist arbitrarily far from the ``root" $o$ of the cluster. This makes it impossible to determine the type of the cluster of a vertex $o$ from a large enough neighborhood up to arbitrary precision, giving a contradiction. One difficulty in this sketch is that the pivotal edges are random (dependent on the configuration), hence one cannot directly apply the insertion tolerance property to bound the probabilities after insertion. This is overcome by the use of a random walk to choose the pivotal edge, at arbitrary distance, in such a way that by inserting that edge, the probability will be distorted up to some uniform factor, regardless of its distance from the root. In our setup, when only weak insertion tolerance is assumed, a further difficulty is that an edge $f$ is removed while an edge $e$ is inserted. This could, in principle, only change the type of vertices that are different from $o$. However, by proper conditioning and choosing $e$ and $f$ be far enough from each other (that is, $r$ large enough, as in the proof of Lemma\ref{pivotal_exists}), one can guarantee that infinitely many vertices change their type, including $o$, without distorting the probability of the event too much.

\begin{proof}
Fix a type $\A$ and an $r\geq 0$, $\delta>0$, $p_\A >0$ such that Lemma \ref{pivotal_exists} holds with $C_{z(e,f)}\in\A$. Fix some vertex $o$. 
Define $\A_o$ as the event that $C_o$ is of type $\A$. Given $e=\{x,y\}\in E$, $w\in\{x,y\}$ and $f=f(\omega, e,w,r)$, let $\Pef$ be the event that $(e,f)$ is $r$-pivotal with $z(e,f)=x$,  $Z_{e,x,r}(\omega)<\delta^{-1}$, $C_x\in \A$. (Note that  $Z_{e,x,r}(\omega)$ is defined whenever $(e,f)$ is pivotal.)
For an arbitrary $\epsilon >0$, let $\A_o'(\epsilon)=\A_o'$ be some fixed event that depends on only finitely many edges and satisfies $\P(\A_o\Delta \A_o')<\epsilon$. Fix $R=R(\epsilon)$ such that $\A_o'$ only depends on edges in $B(o,R)$.

Let $\W_\omega=(W_\omega(j))_{j=-\infty}^{\infty}$ be the biinfinite 2-sided delayed random walk on $C_o$ with $W(0)=o$. Define $e_n$ as a uniformly chosen edge incident to $W_\omega(n)$, let $w_n$ be a uniformly chosen endpoint of $e_n$, and let $f_n=f(\omega, e_n,w_n,r)$ if $e_n$ connects two distinct infinite components. If $e_n$ is not such (and hence the definition of $f(\omega, e_n,w_n,r)$  in WIT does not apply), then we do not define $f_n$. (We will later apply the
operator ${\pi_{e}^{f}}$ for $(e,f)=(e_n,f_n)$, but only when $(e_n,f_n)$ is pivotal.)

Let $\hat\P_o=\hat \P$ be the joint distribution of the random forest and the two-sided delayed simple random walk $\W_\omega$ on $C_o$ started from vertex $o$. 

For a fixed $e\in E$, $e=\{x,y\}$,  
denote by $\Ene (\omega)$ the event that $e_n=e$, $f_n=f$, $W_\omega (n)=x$, and that $W_\omega (j)$ is not an endpoint of $e$ or $f$ whenever $-\infty<j<n$. We mention (though we will only use this fact later) that $\Ene$ has positive probability for certain pairs $(e,f)$ and $n$ by Lemma \ref{pivotal_exists}, with the last bullett point in the lemma applied to $(W(-1),W(-2),\ldots)$.)
The WIT property implies that for any measurable $\B \subset \Pef$ such that $x$ and $y $ are in different components on $\B$,

$$\hat\P(\Ene (\omega)\cap\pii \B)=\hat\P(\Ene (\omega)|\pii\B)\P(\pii\B)=\hat\P(\Ene (\omega) | \B)\P (\pii \B)$$
$$=\frac{\P(\pii \B)}{\P (\B)}\hat\P (\Ene\cap\B) = \frac{\P(\pii \B)}{\int_{\B} Z_{e,x,r}(\omega) d\P_e}\hat\P (\Ene\cap\B). 
$$
(Recall $\P _e$ from Definition \ref{radon}.) The second equality here follows from the fact that $\Ene (\omega)$ (whose probability is coming from the random walk) does not depend on whether $e$ or $f$ is in $\omega$.
Apply the previous equality to $\B= \A_o'\cap \Pef$. Using that $Z_{e,x,r}(\omega)<\delta^{-1}$ on $\Pef$ and hence $\int_{\B} Z_{e,x,r}(\omega) d\P_e \leq \delta^{-1} \P (\pii \B)$, we obtain:

\begin{equation}\hat\P (\Ene \cap\pii (\A_o'\cap \Pef))\geq \delta \hat\P (\Ene \cap\A_o'\cap \Pef).
\label{eq:eq1}
\end{equation}

Note that if $(\omega,\W)\in \Ene\cap \Pef$, then $C_{W(n)}=C_o$ and the components of $o$ and $W(n)$ also coincide in $\piin \omega$.
We have obtained in \eqref{eq:eq1} that by closing $e$ and opening $f$ we distort the probability of our event by at most a factor of $\delta$, where ``our event" is, vaguely speaking, the event that the random walk on $C_o$ hits the endpoint $x$ of $e$ in the $n$'th step, $(e,f)$ is pivotal, and by changing the status of $e$ and $f$, the type of $x$ (and hence the type of $o$) will change. This suggests that the status of $e$ and $f$ has a high impact on the type of $o$ on this event. In what follows, we will apply this observation to all possible pivotal pairs $(e,f)$, and use the fact that the random walk hits infinitely many of them eventually. If $n$ is large enough, both $e$ and $f$ are outside of the cylinder that determines $\A_o'$, leading to a conclusion that the type of $o$ is not determined by $\A_o'$ up to a small error, a contradiction. We make this argument precise in the rest of the proof.

For $e\not\in B(o,R), f\not\in B(o,R)$,
if $(\omega,\W)\in\Ene\cap \Pefn$, then $\piin\omega\not\in \A_{W(n)}$ (by definition of $\Pef$) and thus $\piin\omega\not\in \A_o$.
Consequently, for such $e$ and $f$, $\A_o\cap \piin (\A_o'\cap  \Pefn)\cap \Ene=\emptyset$ up to measure 0. On the other hand, for $e\not\in B(o,R), f\not\in B(o,R)$, we have $\A_o'\supset \piin (\A_o'\cap  \Pefn)\cap \Ene$, because $\A_o'$ is determined by the edges in $B(o,R)$.
These observations show that $\A_o'\setminus \A_o \supset \bigcup_{e\not \in B(x,R),f\not \in B(x,R), e=\{x,y\}} \piin (\A_o'\cap  \Pefn)\cap \Ene$. This implies
the first inequality below, while the second one is by \eqref{eq:eq1}:

$$\P(\A_o'\setminus \A_o)\geq \sum_{e\not \in B(x,R),f\not \in B(x,R), e=\{x,y\}} \hat\P (\piin (\A_o'\cap \Ene \cap \Pef))$$
$$\geq \delta  \sum_{e\not \in B(x,R),f\not \in B(x,R), e=\{x,y\}} \hat\P  (\A_o'\cap \Ene \cap \Pef)\geq -\delta\epsilon+\delta  \sum_{e\not \in B(x,R),f\not \in B(x,R), e=\{x,y\}} \hat\P  (\A_o\cap \Ene \cap \Pef), $$
for every $n$.
We may assume $\delta<1$. Rewrite the inequality as
$$2\epsilon\geq\epsilon+\P(\A_o'\setminus \A_o)\geq  \delta  \sum_{e\not \in B(x,R),f\not \in B(x,R), e=\{x,y\}} \hat\P  (\A_o\cap \Ene \cap \Pef).$$
By choosing $n$ large enough, the right hand side is arbitrarily close to $\delta  \sum_{e,f\in E, e=\{x,y\}} \hat\P  (\A_o\cap \Ene \cap \Pef)$, using the transience of $C_o$ (by Lemma \ref{transient} and the definition of $\Ene$).
So fix $n(\epsilon)$ such that $\left|\delta  \sum_{e,f\in E, e=\{x,y\}} \hat\P  (\A_o\cap \Ene \cap \Pef)- \delta  \sum_{e\not \in B(x,R),f\not \in B(x,R), e=\{x,y\}} \hat\P  (\A_o\cap \Ene \cap \Pef)\right|<\epsilon$
holds for every $n\geq n(\epsilon)$. Then for $n\geq n(\epsilon)$, we have
$$3\epsilon\geq \delta  \sum_{e,f\in E, e=\{x,y\}} \hat\P  (\A_o\cap \Ene \cap \Pef)=\delta\sum_{e,f\in E, e=\{x,y\}} \hat\P  (\A_o\cap {\cal E}_{x,e,f}^0 \cap \Pef) .$$
Here the last equation holds because $\sum_{e,f\in E, e=\{x,y\}} \hat\P  (\A_o\cap {\cal E}_{x,e,f}^n \cap \Pef)$ is the same as $\sum_{e,f\in E, e=\{x,y\}} \hat\P  (\A_{W(n)}\cap {\cal E}_{x,e,f}^n \cap \Pef$), and the latter is equal to $\sum_{e,f\in E, e=\{x,y\}} \hat\P  (\A_o\cap {\cal E}_{x,e,f}^0 \cap \Pef)$
by Lemma \ref{stationary}.
The right hand side is above some uniform positive constant for every $\epsilon$ by Lemma \ref{pivotal_exists}. Letting $\epsilon$ tend to zero gives a contradiction.

\end{proof}
\qed

\begin{exmp}
The next example shows that the condition that $\F$ has a tree with infinitely many ends cannot be removed with all other conditions unchanged. That is, there exists a weakly insertion tolerant random forest $\F$ with all components infinite, but such that its components can be distinguished. Let $G':=\Z^5$ and $\F'$ be the WUSF(=FUSF) on $G'$. Let $G$ be the quasitransitive graph as follows. For each $v\in V(G')$, define two new vertices $v'$ and $v''$, and let $V(G)=\cup_{v\in V(G')} \{v,v',v''\}$. Add all edges $\{v,v'\},\{v,v''\},\{v',v''\}$ besides the edges of $G'$ (so $E(G)=\cup_{v\in V(G')} \{\{v,v'\},\{v,v''\},\{v',v''\}\}\cup E(G')$). Define $\F$ from $\F'$ by first taking $\F'$ on $E(G')\subset E(G)$. For each cluster $C$ of $\F'$, flip a coin. If it comes up head, for each $v\in C$ add one of the pairs $\{v,v'\},\{v,v''\}$ or $\{v,v'\},\{v',v''\}$ or $\{v,v''\},\{v'',v'\}$
to the edge set of $\F$, and decide which one to add uniformly, and independently over the $v$. 
If the coin came up tail, then do the same thing, but now the probability of adding edge $\{v,v'\},\{v,v''\}$ is $1/2$, while the probabilities for adding edges $\{v,v'\},\{v',v''\}$ or edges $\{v,v''\},\{v'',v'\}$ are $1/4$. This way we defined an invariant spanning forest $\F$ on $G$. Clusters containing trees of $\F'$ where the coin tosses came up head are distinguishable from those where it came up tail, from the densities of the 2-paths and ``cherries" hanging off the vertices in $V(G')$. On the other hand, using the fact that the components of $\F'$ are one-ended and that the WUSF is weakly insertion tolerant, one can check that $\F$ is also weakly insertion tolerant. (Note however that if we applied the same construction for an $\F'$ where every cluster has infinitely many ends, then the resulting $\F$ would not be weakly insertion tolerant.)
\end{exmp}

\begin{remark}\label{unimod}
All results in the paper are valid in the more general setting when $G$ is a unimodular random network. More precisely, let $(G,o)$ be an ergodic unimodular random network, as defined in \cite{AL}. The definitions of the uniform and minimal spanning forests can be extended to this setting, see Section 7 of \cite{AL}. Lemmas \ref{transient} and \ref{stationary} also hold for unimodular random graphs: the referred proofs for them are either explicitely for such graphs or generalize right away. The definition of weak insertion tolerance has to be modified by requiring the properties in Definition \ref{maindef} to hold for every edge $e$ of almost every $(G,o)$. Expectation in the proofs is understood jointly with respect to the distribution of the unimodular random graph and the random forest. To apply the proof of Theorem \ref{comps} directly, one needs to have finite expected degree for $(G,o)$. However, by using cutoff (applying mass transport only when the vertex has a degree below some properly chosen bound), one can extend the proof to an arbitrary unimodular graph.

\end{remark}

\subsection*{Acknowledgements}
I am indebted to Russ Lyons and G\'abor Pete for several useful discussions and for their comments on the manuscript.

\end{document}